\newtheorem{thm}{Теорема}[section]
\newtheorem{Thopi}{Теорема о повторных интегралах}
\newtheorem{lemma}{Лемма}[section]
\newtheorem{propos}{Предложение}[section]
\newtheorem{corollary}{Следствие}[section]
\theoremstyle{definition}
\newtheorem{definition}{Определение}
\newtheorem{remark}{Замечание}[section]
\newtheorem{example}{Пример}[section]
\renewcommand{\Re}{{\rm Re \,}}
\renewcommand{\leq}{\leqslant}
\renewcommand{\geq}{\geqslant}
\newcommand{\mc}{\mathcal}
\newcommand{\e}{\varepsilon}
\newcommand{\rad}{\text{\tiny\rm rad}}
\newcommand{\bal}{\rm {bal}}
\def\Re{\operatorname{Re}}
\def\Im{\operatorname{Im}}
\def\RR{\mathbb R}
\def\CC{\mathbb C}
\def\NN{\mathbb N}
\def\DD{\mathbb D}
\DeclareMathOperator{\clos}{clos}
\DeclareMathOperator{\Har}{har}
\DeclareMathOperator{\Hol}{Hol}
\DeclareMathOperator{\Poi}{P}
\DeclareMathOperator{\Arg}{Arg}
\DeclareMathOperator{\Exp}{Exp}
\DeclareMathOperator{\Zero}{Zero}
\DeclareMathOperator{\sbh}{sbh}
\DeclareMathOperator{\supp}{supp}
\DeclareMathOperator{\type}{type}
\DeclareMathOperator{\ord}{ord}
\DeclareMathOperator{\up}{\text{\rm \tiny up}}
\DeclareMathOperator{\lw}{\text{\rm \tiny low}}
\DeclareMathOperator{\dd}{\,{\mathrm d\!}}
\begin{document}

\title[Классические выметания мер и субгармонических функций на систему лучей]{Классические выметания мер\\ и субгармонических функций  на систему лучей}

\author[Хабибуллин Булат Нурмиевич]{Б. Н. Хабибуллин}
\address{Башкирский государственный университет}
\email{khabib-bulat@mail.ru}

\author[Хасанова Альбина Валиевна]{А. В. Хасанова}
\address{Башкирский государственный университет}
\email{albina\_xa@mail.ru}



\keywords{субгармоническая функция, целая функция, последовательность нулей,  мера Рисса, заряд, выметание}

\subjclass{31A05, 30D15, 31A15} 

\UDC{517.53, 517.574}

\begin{abstract}
Конструируются и применяются классические выметания мер и  субгармонических функций на замкнутую систему лучей   комплексной плоскости с вершиной в нуле, включая меры и субгармонические функции и бесконечного порядка. Необходимость такой процедуры возникает при исследовании поведения целых и субгармонических функций на системах лучей. Результаты применяются к полной регулярности роста субгармонических и целых функций на системе лучей, исследованию распределения нулей  целых функций экспоненциального типа класса A. Затрагивается неполнота экспоненциальных систем. Работа содержит как разделы обзорного характера, так и новые результаты.
\end{abstract}

\thanks{Работа выполнена при финансовой поддержке РФФИ (грант №\,16-01-00024).}


\maketitle

\tableofcontents

\section{Введение}\label{s10}

\subsection{Истоки и предмет статьи}\label{111}

В контрасте со многими классическими именными математическими результатами и методами, приписываемое авторство которым  часто бывает  сомнительным, 
можно с уверенностью сказать, что первооткрывателем техники выметания\footnote{balayage or sweeping (out) method} меры и потенциала  на границу области, или из области, был Анри Пуанкаре  (конец XIX века \cite{P1}--\cite{P2}). В первой половине XX века его метод классического выметания был существенно развит  Шарлем Валле-Пуссеном \cite{V-P}. Дальнейший необъятный список авторов и работ, причастных к развитию метода выметания, можно частично извлечь из монографий Н.\,О.~Ландкофа \cite{L}, Ю.~Блайднера и В.~Хансена
\cite{BH}. В последней книге большое внимание уделено абстрактной форме выметания. Несколько иная общая форма выметания разрабатывалась и широко использовалась нами применительно к разнообразным задачам
в \cite{Kha01}--\cite{Khab011}. В настоящей работе мы возвращаемся к классическому выметанию  на конкретные замкнутые множества: системы лучей с одной общей  вершиной в нуле. Специфика таких множеств --- некомпактность, возможная бесконечность числа неограниченных компонент связности, неограниченность выметаемых на систему лучей мер и и субгармонических функций, вплоть до бесконечного порядка. Ранее один из авторов уже рассматривал эти вопросы в связи с асимптотическим поведением субгармонических и целых функций и распределением их мер Рисса и последовательностей нулей в    \cite{Kha88}--\cite{Khsur}. При этом был получен ряд новых результатов: критерий вполне регулярного роста целой функции на системе  лучей,
перенос теоремы Рубела\,--\,Мальявена \cite{MR}, \cite[гл.~22]{RC}
о минимально возможной ширине индикаторной диаграммы целой функции экспоненциального типа с положительными нулями на функции с произвольными комплексными нулями, законченный критерий полноты экспоненциальной системы в пространстве голоморфных функций в произвольной фиксированной выпуклой неограниченной области в естественных топологиях и исключительно в геометрических терминах, а также  многое другое. При этом для тонких ситуаций \cite{kh91AA} использовалось не только классическое выметание, но и новое на тот период выметание рода 1, когда выметание меры дает заряд, т.\,е. вещественнозначную меру. Выметания рода $1$ \cite{kh91AA}--\cite{KhDD92} или, более общ\'о, рода $q\in \NN$ предполагается дополнительно исследовать в иной работе. Поэтому некоторая подготовительная работа <<с запасом>> для этих целей проведена и в настоящей статье там, где это уместно и вписывается  в непосредственно применяемые частные случаи.
 
Здесь мы исследуем специальные типы выметания мер и субгармонических функций на системы лучей на комплексной плоскости $\CC$ с вершиной в нуле, которые, хотя и представляют частные случаи выметаний, весьма полезны при исследовании роста и асимптотического поведения целых и субгармонических функций на системах лучей из  $\CC$. Основные обозначения и определения введены в подразделе \ref{s1.1}, к которому можно обращаться по мере необходимости.  Во вспомогательных целях систематизируются различные известные шкалы роста около бесконечности (исчезания около нуля) функций и мер в разделе \ref{sss:sh}.

Существенно совершенствуется, особенно в количественном аспекте,  классическое выметание на систему лучей за счет в основном точных верхних и нижних оценок гармонической меры для полуплоскости (угла) подразделах \ref{p_3_1}, \ref{s:dal}. При выметании меры и субгармонической функции рассматриваем не только меры и функции конечного порядка, но и меры и функции бесконечного порядка в разделах   \ref{bal_S_0}--\ref{imu}.  Основной результат о распределении нулей  функциях вполне регулярного роста приобретает законченный характер в подразделе  \ref{vpreg}. В подразделе \ref{bAmp} показано, что за счет функции-мультипликатора индикаторную диаграмму  целой функция экспоненциального типа и класса A можно сделать сколь  угодно узким. Последнее применяется к неполноте экспоненциальных систем в вертикальных сколь угодно узких  (полу)полосах.

 В отношении целых и субгармонических функций опираемся прежде всего на терминологию, обозначения и понятия из \cite{Levin56}, \cite{Levin96}, \cite{HK}, \cite{Az}.  \cite{Rans}.

\subsection{Основные обозначения, определения и соглашения}\label{s1.1}
\subsubsection{Множества, порядок, топология}\label{1_1_1}
Как обычно,  $\NN=\{1,2,\dots\}$ --- множество {\it натуральных чисел,\/} но $\NN_0:=\{0\}\cup \NN$ --- <<французское>> множество натуральных чисел,  $\RR$ и  $\CC$  --- множества всех {\it  вещественных,\/} или действительных,  и  {\it комплексных чисел\/} соответственно. 
Эти числовые множества, если необходимо и возможно,  рассматриваются  в их естественной алгебраической, порядковой, геометрической и/или топологической интерпретации. 
Через $\CC_{\infty}:=\CC\cup\{\infty\}$ обозначаем {\it расширенную комплексную плоскость,\/} или сферу Римана с естественной топологией. Здесь мы предпочитаем избегать использования сферического расстояния и для $z\in \CC_{\infty}$ через $|z|$ обозначаем обычный модуль числа $z\in \CC$, а  $|\infty|:=+\infty$. Используем обозначения $\CC^{\up}:=
\{z\in \CC \colon \Im z>0\}$ и $\CC_{\lw}:=-\CC^{\up}=\{z\in \CC\colon -z\in \CC^{\up}\}$  --- {\it верхняя и нижняя полуплоскости;\/}
$\overline \CC^{\up}:=\CC^{\up}\cup \RR$ и $\overline \CC_{\lw}:=\CC_{\lw}\cup \RR$ --- {\it замкнутые в $\CC$ верхняя и нижняя полуплоскости}. Используем также обозначения 
	\begin{subequations}\label{se:R}
	\begin{align} 
	\RR_{-\infty}:=\{-\infty\}\cup \RR=:[-\infty,+\infty),& \quad \RR_{+\infty}:=\RR\cup \{+\infty\}=:(-\infty,+\infty],
		\tag{\ref{se:R}a}\label{seR:a}
	\\ 
	\RR_{\pm\infty}:=\RR_{-\infty}\cup\RR_{+\infty}=:[-\infty, +\infty],& \quad\text{а также\;  $\RR_{\infty}=\RR\cup \{\infty\}\subset \CC_{\infty}$},  
	\tag{\ref{se:R}b}\label{seR:b}
	\end{align}
	\end{subequations}
 где в \eqref{seR:b} определены  соответственно {\it расширенная вещественная  прямая\/} с естественными  отношением порядка $-\infty \leq x\leq +\infty$ для любого $x\in \RR_{\pm\infty}$ и порядковой топологией, а также {\it замыкание $\RR$ в $\CC_{\infty}$}.
Таким образом, открытые окрестности двух  точек $-\infty \in \RR_{\pm\infty}$ и $+\infty \in \RR_{\pm\infty}$  --- это, наряду с $\RR_{\pm\infty}$, соответственно все множества вида $(a,+\infty]:=\{x\in \RR_{\pm\infty}\colon a<x\leq +\infty\}$ и вида $[-\infty,a):=\{x\in \RR_{\pm\infty}\colon -\infty\leq x<a\}$ с произвольным $a\in \RR$.
 {\it Интервал} --- связное подмножество в $\RR_{\pm \infty}$. В то же время открытые окрестности точки $\infty\in \RR_{\infty}$ --- это множества вида $\{x\in \RR_{\infty}\colon |x|\geq a\}$ с произвольным $a\in \RR$.
На подмножества $S\subset \CC_{\infty}$ индуцируется топология с $\CC_{\infty}$, но при рассмотрении  $S\subset \RR_{\pm\infty}$
как подмножества в $\RR_{\pm\infty}$ уже с $\RR_{\pm\infty}$. Если $S\subset \CC_{\infty}$, то 
\begin{equation}\label{procS:0}
S_*:=S\setminus \{0\}\quad \text{--- {\it<<проколотое>> в нуле\/} $S$}.
\end{equation}

 Для  $z\in \CC$ через  $\bar z$ обозначаем  сопряженное комплексное число, $\bar{\infty}:=\infty$. Для $S\subset \CC_{\infty}$ полагаем $\bar S:=\{\bar z \colon z\in S\}$.
 
Одним и тем же символом $0$ обозначаем, по контексту, число нуль, нулевой вектор, нулевую функцию, нулевую меру (заряд) и т.\,п.;
 $\varnothing$ --- {\it пустое  множество.\/} 
 Для подмножества  $X$ упорядоченного векторного пространства  чисел, функций, мер и т.\,п. с отношением порядка \;$\geq$\;
полагаем   	$X^+:=\{x\in X\colon x \geq 0\}$
 --- все положительные элементы из $X$; $x^+:=\max \{0,x\}$. 
{\it Положительность\/} всюду понимается, в соответствии с контекстом, как $\geq 0$.  
{\it На множествах функций\/}   с упорядоченным  множеством значений {\it отношение  порядка\/} индуцируется с множества значений как  {\it поточечное.} 

Для подмножеств $S_0\subset S\subset T$ топологического пространства $T$ через $\clos_S S_0$, $\partial_S S_0$
обозначаем 
соответственно {\it замыкание\/} множества $S_0$ в $S$ и его {\it границу\/} в $S$ в индуцированной с $T$ топологии  в $S$. Для $S_0 \subset S\subset T$ пишем $S_0\Subset S$, если $S_0$ --- {\it относительно компактное подмножество\/} в $S$ в топологии, индуцированной с $T$ на $S$. В основном в настоящей работе  $T=\CC_{\infty}$. 

Для  $r\in \RR_{\pm\infty}$ и $z\in \CC$ полагаем $	D(z,r):=\{z' \in \CC \colon |z'-z|<r\}$  ---  {\it открытый  круг с центром\/ $z$ радиуса\/ $r$.} Так, $D(z,r)=\varnothing$ при $r\leq  0$ и $D(z,+\infty):= \CC$. В частном случае  $D(r):=D(0,r)$; $\DD:=D(1)$ --- {\it единичный круг.\/} 
Кроме того, $\overline{D}(z,r):=\clos D(z,r)$ --- {\it замкнутый круг с центром\/ $z$ радиуса\/ $r\neq 0$.}  Так, $\overline{D}(z,+\infty):=\CC_{\infty}$,
но $\overline{D}(z,0):=\{z\}$.

\subsubsection{ Функции}\label{1_1_2}  Для {\it характеристической функции произвольного  подмножества\/} $A\subset X$
 на $X$ используем обозначение
 \begin{equation}\label{chX}
 \boldsymbol{1}_A(x):=\begin{cases}
 1&\quad\text{при $x\in A$},\\
 0&\quad\text{при $x\in X\setminus A$},
  \end{cases}
 \end{equation}
 как правило,  не прописывая  явно множество $X$.

Для топологического пространства $T$, как обычно, $C_{\RR}(T)$ и С(Е)--- векторные пространства соответственно над $\RR$ со значениями из $\RR$ и над  $\CC$ со значениями в $\CC$. 
	
	Для открытого множества   $\mathcal O \subset \CC_{\infty}$ через  $\Hol ({\mathcal O})$  и $\Har ({\mathcal O})$ обозначаем  векторные  пространства соответственно над полем $\CC$ {\it голоморфных\/} и над полем $\RR$ {\it гармонических\/} в ${\mathcal O}$ функций; $\sbh ({\mathcal O})$ --- выпуклый конус над $\RR^+$ {\it субгармонических\/} в ${\mathcal O}$ функций \cite{HK}, \cite{Rans}.  Субгармоническую функцию, {\it тождественно равную\/} $-\infty$ на ${\mathcal O}$, обозначаем символом $\boldsymbol{-\infty}\in \sbh ({\mathcal O})$; $\sbh_*(\mathcal O):=\sbh (\mathcal O)\setminus \{\boldsymbol{-\infty}\} $. 
Для функции $f$, определенной на множестве с подмножеством $X$ {\it сужение\/ $f$ на\/}  $X$  обозначаем как $f\bigm|_X$.

\subsubsection{ Меры и заряды}\label{1_1_3}  Далее $\mathcal M (S)$ --- класс всех {\it  счетно-аддитивных функций борелевских подмножеств борелевского множества $S\subset \CC_{\infty}$ со значениями в $\RR_{\pm\infty}$, конечных на компактах из $S$.\/} Элементы  из $\mathcal M (S)$ называем {\it зарядами,\/} или   {\it вещественными мерами,\/}    на   $S\subset \CC_{\infty}$;     $\mathcal M^+ (S):=\bigl(\mathcal M (S)\bigr)^+$ --- подкласс {\it положительных мер;\/} $\supp \mu$ --- {\it носитель заряда/меры} $\mu\in \mathcal M (S) $. Заряд $\mu \in  \mathcal M (S)$ {\it сосредоточен\/} на измеримом по мере $\mu$ подмножестве $S_0\subset S$, если $\mu (S')=\mu (S'\cap S_0)$ для любого измеримого по $\mu$ подмножества $S'\subset S$. Для измеримого по $\mu\in \mathcal M(S)$ подмножества  $S_0\subset S$ через $\mu\bigm|_{S_0}$ обозначаем {\it сужение заряда $\mu$ на $S_0$.\/} Через $\lambda_{\CC}$ и $\lambda_{\RR}$ обозначаем {\it лебеговы меры\/} соответственно на 
$\CC_{\infty}$ и на $\RR_{\infty}$ с $\lambda_{\CC}\bigl(\{\infty\}\bigr):=0$ и $\lambda_{\RR}\bigl(\{\pm\infty\}\bigr):=0$, а также их сужения на подмножества из $\CC_{\infty}$. Нижний индекс $\CC$ в $\lambda_{\CC}$ при этом часто опускаем.  Через $\delta_z$ обозначаем {\it меру Дирака в точке $z\in \CC_{\infty}$,\/} т.\,е. вероятностную меру с $\supp \delta_z=\{z\}$, $\delta_z(A)\overset{\eqref{chX}}{:=}
\boldsymbol{1}_A(z)$. 
Как обычно, для $\mu\in \mathcal M(\CC)$ через $\mu^+:=\max\{0,\mu\}$, $\mu^-:=(-\mu)^+$ и $|\mu|:=\mu^++\mu^-$,
обозначаем {\it верхнюю, нижнюю\/  {\rm и} полную вариации заряда\/ $\mu=\mu^+ -\mu^-$.} 
Для $\nu\in \mathcal M (S )$ и $D (z,r)\subset S$ полагаем 
\begin{equation}\label{df:nup}
\nu (z,r):=\nu \bigl(\,\overline D(z,r)\bigr),\quad \nu^{\rad}(r):=\nu(0,r) 
\end{equation}
--- {\it считающая  функция с центром\/ $z$} и {\it считающая радиальная функция} заряда $\nu$ соответственно. Кроме того, для  заряда  $\nu\in \mathcal M(\CC)$  используем {\it функцию\/} $\nu^{\RR}$ его {\it распределения на\/} $\RR$, определяемую по   правилу
\begin{equation}\label{nuR}
\nu^{\RR}(t):=\begin{cases}	
-\nu\bigl( [t,0) \bigr) \;  &\text{ при } t<0 ,\\
\nu\bigl( [0,t] \bigr)  \;  &\text{ при } t\geq 0.
\end{cases}
\end{equation}

Меру Рисса функции ${u}\in \sbh ({\mathcal O})$   обозначаем как $\nu_{u}:=\frac{1}{2\pi}\Delta {u}\in \mathcal M ^+(\mathcal O)$ или $\mu_u$ и  т.\,п. 
Так, для  ${u}\neq \boldsymbol{-\infty}$ её мера Рисса  $\nu_{u}$ --- борелевская положительная мера \cite{Rans}, \cite{HK}. Для  функции же $\boldsymbol{-\infty}\in \sbh({\mathcal O})$ её мера Рисса  по определению равна $+\infty$ на любом подмножестве из ${\mathcal O}$.

\subsubsection{ Нули голоморфных функций}\label{zf}
Пусть $0\neq f\in \Hol({\mathcal O})$. Функция $f$ {\it обращается в нуль на последовательности точек\/ 
${\tt Z}=\{{\tt z}_k\}_{k=1,2,\dots}$,}
 лежащих в $\mathcal O$ (пишем $\tt Z\subset \mathcal O$), если кратность нуля, или корня,  функции $f$ в каждой точке $z \in \mathcal O$ не меньше числа повторений этой точки  в последовательности ${\tt Z}$ (пишем $f({\tt Z})=0$).
Каждой последовательности $\tt Z\subset \mathcal O$ без точек сгущения в $\Omega$ сопоставляем её {\it считающую меру\/}
\begin{equation}\label{df:nZS}
	n_{\tt Z}(S):=\sum_{{\tt z_k}\in S} 1 \quad \text{\it для  произвольных $S\subset D$}
\end{equation}
 --- число точек из $\tt Z$, попавших в $S$.
Последовательность нулей (корней) ненулевой функции $f\in \Hol (D)$, каким-либо образом перенумерованную с учетом кратности, обозначаем через $\Zero_f $. Так как  $\log |f|\in \sbh (\mathcal O)$,  взаимосвязь меры Рисса $\nu_{\log |f|}$   со считающей мерой  её нулей \eqref{df:nZS} задаётся равенством \cite[{теорема }3.7.8]{Rans}
\begin{equation}\label{nufZ}
	\nu_{\log |f|}=\frac1{2\pi} \Delta \log |f|\overset{\eqref{df:nZS}}{=}n_{\Zero_f}.
\end{equation}
В частности, из $f({\tt Z})=0$ следует $n_{\tt Z}\leq n_{\Zero_f}$ на $\mathcal O$, и наоборот.

\subsubsection{ Некоторые соглашения}\label{ns} Число $C$ и постоянную функцию, тождественно равную  $C$, не различаем.  Ссылка над знаками (не)равенства, включения, или, более общ\'о, бинарного отношения, означает, что при переходе к правой части этого отношения применялись, в частности, и эта  формула, определение, обозначение или  утверждение. 

{\it (Под)область в\/ $\CC_{\infty}$ --- открытое связное подмножество в\/ $\CC_{\infty}$.}

\section{Шкалы роста и исчезания функции}\label{sss:sh} 
\setcounter{equation}{0}
 Основная цель этого  \S~\ref{sss:sh} --- систематизировать в целом известные элементарные факты, изложения и применения которых разбросаны по различным источникам, а также согласовать определения и понятия. Определения и утверждения  этого подраздела~\ref{sss:sh}  не зависят от выбора  числа $r_0>0$.
\subsection{Рост около\/ бесконечности} 
Для  $f\colon [r_0,+\infty) \to \RR$ определим 
\begin{subequations}\label{se:nu0}
\begin{align} 
\ord_{\infty}[f]&:=\limsup_{r\to +\infty} 	\frac{\log^+f^+(r)}{\log r}\in \RR^{+}
\tag{\ref{se:nu0}a}\label{senu0:a}
\\
\intertext{{\it --- порядок роста\/} (функции)  $f$ {\it около\/} (точки) $+\infty$; для $p\in \RR^+$} 
	\type^{\infty}_p[f]&:=\limsup_{r\to +\infty} 	\frac{f^+(r)}{r^p}\in \RR^{+}
\tag{\ref{se:nu0}c}\label{senu0:c}
\end{align}
\end{subequations}
\\ 
{\it --- тип  роста\/} (функции) $f$ {\it при порядке\/ $p$ около\/} (точки) $+\infty$.
Очевидно, 
\begin{equation}\label{t_o}
\text{\it если $\type_p^{\infty}[f]<+\infty$, то $\ord_{\infty}[f]\leq p$}.
\end{equation}
Обратная импликация неверна. Используем еще одну характеризацию  роста.  

Функция $f\colon [r_0,+\infty)\to \RR$ 
 принадлежит {\it классу сходимости\/} или {\it классу расходимости при порядке роста\/ $p\in \RR^+$ около\/} (точки) $+\infty$ 
\cite[определение 4.1]{HK}, если соответственно сходится (к конечному числу) или расходится интеграл
\begin{equation}\label{{se:fcc}a}
\int_{r_0}^{+\infty}\frac{f(t)}{t^{p+1}}\dd t.
\end{equation}.
\begin{propos}\label{pr:fcc} Пусть  $f\colon [r_0,+\infty)\to \RR$ ---   возрастающая функция. 
\begin{enumerate}[{\rm (i)}]
	\item\label{i:i1} Из сходимости интеграла \eqref{{se:fcc}a} следует $\type_p^{\infty}[f]=0$  и 
	 сходимость интеграла Стильтьеса
\begin{equation}\label{cc:fint}
\int_{r_0}^{+\infty}\frac{\dd f(t)}{t^{p}}<+\infty.
\end{equation}
\item \label{i:i2} Обратно, из  \eqref{cc:fint}  при $p>0$ следует $\type_p^{\infty}[f]=0$  и сходимость  интеграла \eqref{{se:fcc}a}, а при $p=0$ существует  $\lim\limits_{r\to +\infty}f(r)<+\infty$ и $\type_0^{\infty}[f]<+\infty$. 
\end{enumerate}
Связь между интегралами \eqref{{se:fcc}a} и \eqref{cc:fint} в условиях \eqref{i:i1} и \eqref{i:i2} --- равенство
\begin{equation}\label{in:poch}
	\int_{r}^{+\infty}\frac{\dd f(t)}{t^{p}}=-\frac{f(r)}{r^p}+p\int_{r}^{+\infty}\frac{f(t)}{t^{p+1}}\dd t
\end{equation}
\end{propos}
\begin{proof} Если в \eqref{i:i1}  $p=0$, то из сходимости интеграла в \eqref{{se:fcc}a} следует, что существует 
$\lim\limits_{r\to +\infty}f(r)=0$ и  п.~\eqref{i:i1} заключения очевидны. Значит достаточно рассмотреть случай $p>0$.
Как в \cite[лемма 4.3]{HK} при $r\geq r_0$ имеем
\begin{equation}
f(r) \frac1p\, r^p \leq \int_{r}^{+\infty}\frac{f(t)}{t^{p+1}}\dd t\underset{r\to +\infty}{\longrightarrow}0,
 \end{equation}
что влечет за собой $\type_p^{\infty}[f]=0$. Отсюда, подобно \cite[(4.2.5)--(4.2.6)]{HK}, интегрируя по частям, для любого 
$r\geq r_0$ получим \eqref{in:poch},
где  сходимость последнего интеграла влечет за собой \eqref{cc:fint}.

Обратно, если $p>0$, то при $r_0\leq r<R$ из \eqref{cc:fint} 
\begin{equation*}
\frac{1}{R^p}\bigl(f(R)-f(r)\bigr)\leq \int_{r}^{R}\frac{\dd f(t)}{t^{p}}\dd t\underset{r\to +\infty}{\longrightarrow}0,
\end{equation*}
откуда $\type_p^{\infty}[f]=0$. Тогда справедливо \eqref{in:poch} и сходится интеграл \eqref{{se:fcc}a}. Если же $p=0$, то 
\eqref{cc:fint} эквивалентно  существованию предела $\lim\limits_{r\to +\infty}f(r)<+\infty$.
\end{proof}

\subsection{Исчезание  около нуля} 
Для  $f\colon (0,r_0] \to \RR$ определим 
\begin{subequations}\label{se:nu00}
\begin{align} 
\ord_{0}[f]&:=\liminf_{0<r\to 0} 	\frac{\log f^+(r)}{\log r}\in \RR^{+}
\tag{\ref{se:nu00}a}\label{senu00:a}
\\
\intertext{{\it --- порядок исчезания\/} (функции)  $f$ {\it около\/} (точки) $0$; для $p\in \RR^+$} 
	\type^{0}_p[f]&:=\limsup_{0<r\to 0} 	\frac{f^+(r)}{r^p}\in \RR^{+}
\tag{\ref{se:nu00}c}\label{senu00:c}
\end{align}
\end{subequations}
\\ 
{\it --- тип  исчезания\/} (функции) $f$ {\it при порядке\/ $p$ около\/} (точки) $0$.

Очевидно, 
\begin{equation}\label{t_o0}
\text{\it если $\type_p^{0}[f]<+\infty$, то $\ord_{0}[f]\geq p$}.
\end{equation}
Обратная импликация неверна. Используем еще одну характеризацию  роста.  

Говорим, что функция $f\colon (0, r_0]\to \RR$ 
 принадлежит {\it классу сходимости\/} или {\it классу расходимости при порядке исчезания\/ $p\in \RR^+$ около\/} (точки) 
$0$, если соответственно сходится (к конечному числу) или расходится интеграл
\begin{equation}\label{{se:fcc}0}
\int_{0}^{r_0}\frac{f(t)}{t^{p+1}}\dd t.
\end{equation}.
\begin{propos}\label{pr:fcc0} Пусть  $f\colon (0,r_0]\to \RR$ ---   возрастающая функция. 
\begin{enumerate}[{\rm (i)}]
	\item\label{i:i10} Из сходимости интеграла \eqref{{se:fcc}0} следует, что  существует 
\begin{equation}\label{ex:limf}
\lim_{0<r\to 0} f(r)\log r =0, 
\end{equation}
а также 
\begin{equation}\label{ex:limf0}
 \int_0^{r_0}\log t \dd f(t)>-\infty,
\end{equation}
а при $p>0$ еще и 	$\type_p^{0}[f]=0$, а также  
\begin{equation}\label{cc:fint0}
\int_{0}^{r_0}\frac{\dd f(t)}{t^{p}}<+\infty.
\end{equation}
\item \label{i:i20} Обратно, если выполнено \eqref{ex:limf0}, то существуют пределы
\begin{equation}\label{es:00l}
	f(0):=\lim_{0<r\to 0} f(r)\in \RR, \quad \lim_{0<r\to 0}\bigl(f(r)-f(0)\bigr)\log r=0, 
\end{equation}
сходится интеграл 
\begin{equation}\label{f-f0}
\int_{0}^{r_0}\frac{f(t)-f(0)}{t}\dd t<+\infty,
\end{equation}
и $\type_0^{0}[f]<+\infty$. Если же при $p>0$ выполнено \eqref{cc:fint0}, то, наряду с \eqref{es:00l}, 
$\type_p^{0}\bigl[f-f(0)\bigr]=0$ и 
\begin{equation}
\int_{0}^{r_0}\frac{f(t)-f(0)}{t^{p+1}}\dd t<+\infty.
\end{equation}
\end{enumerate}
В условиях из \eqref{i:i10} и  \eqref{i:i20} при $p>0$ связь между интегралами \eqref{cc:fint0} и \eqref{f-f0}
дается равенством
\begin{equation}\label{in:poch0}
	\int_{0}^{r_0}\frac{f(t)-f(0)}{t^{p+1}}\dd t=-\frac{f(r_0)-f(0)}{pr_0^p}+\frac1p\int_{0}^{r_0}\frac{\dd f(t)}{t^{p}},
\end{equation}
а при $p=0$ связь между интегралами из \eqref{{se:fcc}0} и \eqref{ex:limf0} --- равенством
\begin{equation}\label{in:0}
	\int_{0}^{r_0}\frac{f(t)-f(0)}{t}\dd t=\bigl(f(r_0)-f(0)\bigr)\log r_0- \int_0^{r_0}\log t \dd f(t).
\end{equation}
\end{propos}
\begin{proof} Из сходимости интеграла \eqref{{se:fcc}0}  $\lim\limits_{0<r\to 0} f(t)=0$. Следовательно, $f$ --- положительная функция и при $0<r<\sqrt r\leq \min \{1,r_0\}$ 
\begin{equation*}
	0\underset{0<r\to 0}{\longleftarrow}\int_r^{\sqrt r}\frac{f(t)}{t^{p+1}}\dd t
	\geq \int_r^{\sqrt r}\frac{f(t)}{t}\dd t\geq -\frac{1}{2}\,f(r)\log r,
\end{equation*}
что доказывает существование предела в \eqref{ex:limf}, равного нулю. Тогда 
\begin{equation*}
	\int_0^{r_0}\log t \dd f(t)=f(r_0)\log r_0-\int_{0}^{r_0}\frac{f(t)}{t}\dd t\geq f(r_0)\log r_0-\int_{0}^{r_0}\frac{f(t)}{t^{p+1}}\dd t>-\infty,
\end{equation*}
что  доказывает \eqref{ex:limf0} и \eqref{in:0} с $f(0)=0$ . При $p>0$
\begin{equation*}
	0\underset{0<r\to 0}{\longleftarrow}\int_{r}^{2r}\frac{f(t)}{t^{p+1}}\dd t\geq f(r)\,\frac{1}{p} \left(\frac{1}{r^p}-\frac{1}{(2r)^p}\right)=\frac{1}{p} \left(1-\frac{1}{2^p}\right)\frac{f(r)}{r^p},
\end{equation*}
откуда $\type_p^{0}[f]=0$.  Тогда можно интегрировать по частям, что дает \eqref{in:poch0} с $f(0)=0$, 
откуда  имеет место \eqref{cc:fint0}.

Обратно, пусть выполнено \eqref{ex:limf0}. Тогда существует число $C\in \RR^+$, с которым для любых $r\in (0, r_1)$ при  
фиксированном $r_1<\min \{1,r_0\}$ 
\begin{equation}\label{Clfr1} 
	-C\leq \int_r^{r_1}\log t \dd f(t)\leq \log r_1 \int_r^{r_1}\dd f(t)=\bigl(f(r_1)-f(r)\bigr)\log r_1, 
\end{equation}
 откуда следует существование первого предела в \eqref{es:00l} и возможность доопределить значение $f(0)\in \RR$. 
В частности, $\type_0^{0}[f]<+\infty$. Кроме того, устремляя в \eqref{Clfr1} сначала $r$ к нулю, а затем и $r_1$, получаем
\begin{equation}\label{r-rf}
	0=\lim_{0<r_1\to 0}\int_0^{r_1}\log t \dd f(t)\leq \liminf_{0<r_1\to 0} \,\bigl(f(r_1)-f(0)\bigr)\log r_1.
\end{equation}
При $0<r<\sqrt r<r_1=\min \{1,r_0\}$ из 
\begin{multline*}
0\underset{0<r\to 0}{\longleftarrow}\int_{r}^{\sqrt r} \log t\dd f(t)=
\int_{r}^{\sqrt r} \log t\dd \bigl(f(t)-f(0)\bigr)\geq \log r\int_{r}^{\sqrt r} \dd \bigl(f(t)-f(0)\bigr)
\\
\geq \log r\int_{0}^{\sqrt r} \dd \bigl(f(t)-f(0)\bigr)=\bigl(f(\sqrt r)-f(0)\bigr)\log r =2\bigl(f(\sqrt r)-f(0)\bigr)\log \sqrt r
\end{multline*}
следует
\begin{equation*}
	0\geq \limsup_{0<r_1\to 0} \,\bigl(f(\sqrt r)-f(0)\bigr)\log \sqrt r,
\end{equation*}
что вместе с \eqref{r-rf} дает второе предельное равенство в \eqref{es:00l}. Теперь можно интегрировать по частям:
\begin{equation*}
\int_{0}^{r_0}\frac{f(t)-f(0)}{t}\dd t=\bigl(f(r_0)-f(0)\bigr)\log r_0- \int_0^{r_0}\log t \dd f(t)\overset{\eqref{ex:limf0}}{<}+\infty,
\end{equation*}
что дает \eqref{f-f0}. Пусть теперь $p>0$. Тогда автоматически выполнено \eqref{ex:limf0}, а  следовательно и \eqref{es:00l}.
При этом из \eqref{cc:fint0} имеем
\begin{equation}
0\underset{0<r\to 0}{\longleftarrow} \int_{0}^{r}\frac{\dd \,\bigl(f(t)-f(0)\bigr)}{t^p}
\geq \frac{f(r)-f(0)}{r^p},
\end{equation}
что доказывает $\type_p^{0}\bigl[f-f(0)\bigr]=0$ и позволяет интегрировать по частям:
\begin{equation*}
\int_{0}^{r_0}\frac{f(t)-f(0)}{t^{p+1}}\dd t=-\frac{f(r_0)-f(0)}{pr_0^p}+\frac1p\int_{0}^{r_0}\frac{\dd f(t)}{t^{p}}
\overset{\eqref{cc:fint0}}{<}+\infty.
\end{equation*}
\end{proof}

\section{Классические выметания}
\setcounter{equation}{0}

\subsection{Гармоническая мера, ее геометрический смысл и интеграл Пу\-а\-с\-с\-о\-на}\label{p_3_1}

 Через $\mathcal B (S)$ обозначаем $\sigma$-алгебру борелевских подмножеств борелевского множества $S\subset\CC_{\infty}$, а при $S\subset \CC$ используем обозначение $\mathcal B_{\rm b} (S)\subset \mathcal B (S)$ для подкласса ограниченных в $\CC$ множеств из $\mathcal B (S)$.
 Для подобласти $D\subset \CC_{\infty}$ с неполярной границей $\partial D\neq \varnothing$ в обозначениях из \cite[определение 4.3.1]{Rans}, несколько отличных от обозначений из \cite[теорема 3.10]{HK}, \cite{GM}, через 
\begin{equation}\label{om_0}
\omega_D\colon D\times \mathcal B (\partial D) \to \RR^+
\end{equation}
обозначаем  функцию, называемую {\it гармонической мерой для\/ $D$},
которая однозначно  определяется требованиями
\begin{enumerate}[{\rm (a)}]
	\item\label{hm:a} $	\omega_D(z, B)\in \mathcal{M}^+(\partial D)$   для каждой точки $z\in D$ и множества $B\in \mathcal B (\partial D)$;
	\item\label{hm:b}  если $f\in C_{\RR}(\partial D)$, то {\it интеграл Пуассона\/} $\mathcal{P}_D f$ функции $f$ на $D$
		\begin{equation}\label{df:PDf}
\mathcal P_Df\colon z\mapsto \int_{\partial D} f(w)\dd \omega_D(z,w), \quad z\in D,
\end{equation}
совпадает с ассоциированной с функцией $f$ {\it функцией Перрона\/}
\begin{equation*}
z\mapsto \sup \Bigl\{u(z)\colon u\in \sbh(D), \limsup_{D\ni z'\to z}u(z')\leq f(z) \text{ для всех $z\in \partial D$}\Bigr\}, \quad z\in D.
\end{equation*}
\end{enumerate}
Очевидно, $\omega_D(z,\cdot)$ --- {\it вероятностная мера,\/} в чем можно убедится на примере $f=1$ на $\partial D$.
Для произвольного подмножества  $B\subset \CC_{\infty}$ при $B\cap \partial D\in \mathcal B (\partial D) $ и точки $z\in D$ полагаем по определению 
\begin{equation}\label{df:o}
\omega_D(z, B):=:\omega(z, B; D):=\omega_D(z, B\cap \partial D)
\end{equation}
--- {\it гармоническая мера множества $B\subset \CC_{\infty}$ в точке $z\in D$ для\/} $D$.

При включениях $\partial D\subset S \subset \CC_{\infty}$ для  произвольной  функции $f\colon S\to [-\infty, +\infty]$ с сужением $f\bigm|_{\partial D}\in C_{\RR}(\partial D)$ {\it интеграл Пуассона\/} $\mathcal P_Df$ определяется равенством
\begin{equation}\label{df:IP}
	\mathcal P_Df:=\mathcal P_D\bigl(f\bigm|_{\partial D}\bigr).
\end{equation}
Определение \eqref{df:IP}, допуская в \eqref{df:IP}  и значения $-\infty$,  можно распространить на полунепрерывные сверху  
сужения $f\bigm|_{\partial D}$  со значениями в $\RR_{-\infty}$, поскольку такая функция --- предел убывающей последовательности функций из $C_{\RR}(\partial D)$. 
Более того, далее в многих ситуациях можно допускать неопределенность функции $f$ в конечном числе точек 
на границе $\partial D$, в роли которых  будут выступать, в основном, две точки $0, \infty \in \CC_{\infty}$. 
В частности, когда $D=\CC^{\up}$,  определения \eqref{om_0}--\eqref{df:IP} можно переписать через  классические {\it ядро  Пуассона\/} 
\begin{equation}\label{df:kP}
	\quad \Poi_{\CC^{\up}}(t,z):=\frac{1}{\pi}\Im \frac{1}{t-z}
	=\frac{1}{\pi}\,\frac{\Im z}{\bigl(t-\Re z\bigr)^2+(\Im z)^2}\, , \quad t\in \RR, \; z\in \CC^{\up},
\end{equation}
а также, для всех $z\in \CC^{\up}$, через {\it интеграл Пуассона\/} 
\begin{subequations}\label{se:om}
 \begin{align} 
 \omega_{\CC^{\up}}(z, B)&=\omega(z, B;\CC^{\up})=\int_{B\cap \RR}\Poi (t,z) \dd t
=\int_{B\cap \RR_{\pm\infty}} \Poi (t,z) \dd \lambda_{\RR}(t),
 \tag{\ref{se:om}a}\label{seom:a}
\\ (\mathcal P_{\CC^{\up}}f)(z)&=\int_{-\infty}^{+\infty}  f(t) \Poi (t,z)\dd t
=\int_{\RR_{\pm\infty}}\Poi (t,z)\,f(t)\dd \lambda_{\RR}(t),\quad z\in \CC^{\up}.
 \tag{\ref{se:om}b}\label{seom:b}
 \end{align}
 \end{subequations}
В случае $D=\CC^{\up}$ в обозначениях левых частей из \eqref{df:kP} и \eqref{se:om} нижний индекс ${\CC^{\up}}$, как правило, опускаем, т.\,е. пишем просто $ \Poi (t,z)$,  $\omega(z, B)$ и $(\mathcal P f)(z)$.
Определение \eqref{seom:a} 
можно распространить на $z=x\in {\RR}$, полагая $\omega(x,B)=\delta_x(B)$ --- мера Дирака в точке $x$.
Для функций $f$ с полунепрерывным сверху сужением $f\bigm|_{\RR}$ и  со значениями в $\RR_{-\infty}$ при условии
\begin{equation}\label{J2+}
	J_2^+[f]:=\int_{-\infty}^{+\infty} \frac{f^+(t)}{1+t^2}\dd t <+\infty
\end{equation}
 будем полагать $(\mathcal P f)(x)=f(x)\in \RR_{-\infty}$ для $x\in \RR$.

Гармоническая мера для верхней полуплоскости  имеет известный простой геометрический смысл. Для каждого отрезка 
$[t_1,t_2]\subset \RR$ --- это угол, под которым виден этот отрезок из точки $z\in \CC^{\up}$, деленный на $\pi$ \cite[гл.~I, 3]{Garnett}. Аналитически это записывается как 
\begin{subequations}\label{multo}
\begin{align}
\omega\bigl(z, [t_1,t_2]\bigr)=
\frac{1}{\pi}(\arctg d_2	-\arctg d_1), 
\tag{\ref{multo}a}\label{mult}\\
d_2:= \frac{t_2-\Re z}{\Im z}, \quad d_1:=\frac{t_1-\Re z}{\Im z}\,.
\tag{\ref{multo}b}\label{d1d2}
\end{align}
\end{subequations}

\subsection{Представления и  оценки гармонической меры}\label{s:dal}
Приведенные ниже утверждения данного подраздела ~\ref{p_3_1} и следующего подраздела ~\ref{s:dal} часто формулируются в более общей форме, чем необходимо для наших целей. Причины этого состоят как в исключительной важности и пользе  понятия гармонической меры и ее количественных характеристик в теориях функций комплексного переменного и потенциала, так и в заготовке материалов для исследований  в направлениях, отличных от затронутых в настоящей работе. Всюду в  доказательствах мы предпочитаем использовать геометрический смысл гармонической меры \eqref{multo}, хотя все утверждения этого родраздела ~\ref{s:dal}
могут быть установлены и через представление  \eqref{seom:a} гармонической меры интегралом Пуассона. Именно последний подход был использован в \cite{Kha89}--\cite[гл.I, II]{KhDD92}. 

\begin{propos}\label{pr:gm} Пусть  $-\infty<t_1<t_2<+\infty$.
Если  точка $z\in \CC^{\up}$ лежит вне замкнутого полукруга из верхней полуплоскости с ди\-а\-м\-е\-т\-р\-ом-от\-р\-е\-з\-к\-ом  $[t_1,t_2]\subset \RR\subset \CC$, что в аналитической форме записывается как
\begin{subequations}\label{krt}
\begin{align}
\Bigl|z-\frac{t_2+t_1}{2}\Bigr|> \frac{t_2-t_1}{2}\,,
\tag{\ref{krt}a}\label{zt1t2}\\
\intertext{или в эквивалентной форме}
	|z|^2-\Re z\, (t_1+t_2)+t_1t_2>0,
\tag{\ref{krt}b}\label{kr}
\end{align}		
\end{subequations}
то имеют место равенство и оценка сверху 
\begin{subequations}\label{eo}
\begin{align} 
\omega\bigl(z,[t_1,t_2]\bigr)&=\frac{1}{\pi}\arctg\frac{(t_2-t_1)\,\Im z}{|z|^2-\Re z \, (t_1+t_2)+t_1t_2}&
\tag{\ref{eo}a}\label{eo:a}
\\
&\leq \frac{1}{\pi}\,\frac{(t_2-t_1)\,\Im z}{|z|^2-\Re z \, (t_1+t_2)+t_1t_2}\, .
 \tag{\ref{eo}b}\label{eo:b}
\end{align}
\end{subequations}

Если  точка $z\in \CC^{\up}$ лежит в открытом полукруге из $\CC^{\up}$ с ди\-а\-м\-е\-т\-р\-ом-от\-р\-е\-з\-к\-ом  $[t_1,t_2]$, что в аналитической форме записывается как
\begin{subequations}\label{krtvn}
	\begin{align}
	\Bigl|z-\frac{t_2+t_1}{2}\Bigr|< \frac{t_2-t_1}{2}\,,
	\tag{\ref{krtvn}a}\label{zt1t2vn}\\
	\intertext{или в эквивалентной форме}
	|z|^2-\Re z\, (t_1+t_2)+t_1t_2<0,
	\tag{\ref{krtvn}b}\label{krvn}
	\end{align}		
\end{subequations}
то имеют место равенство и оценка снизу 
\begin{subequations}\label{eovn}
	\begin{align} 
	\omega\bigl(z,[t_1,t_2]\bigr)&=1+\frac{1}{\pi}\arctg\frac{(t_2-t_1)\,\Im z}{|z|^2-\Re z \, (t_1+t_2)+t_1t_2}&
	\tag{\ref{eovn}a}\label{eovn:a}
	\\
	&\geq 1+\frac{1}{\pi}\,\frac{(t_2-t_1)\,\Im z}{|z|^2-\Re z \, (t_1+t_2)+t_1t_2}\, .
	\tag{\ref{eovn}b}\label{eovn:b}
	\end{align}
\end{subequations}

Наконец, если точка $z\in \CC^{\up}$ лежит на полуокружности   из $\CC^{\up}$  с ди\-а\-м\-е\-т\-р\-ом-от\-р\-е\-з\-к\-ом  $[t_1,t_2]$, что в аналитической форме записывается как
\begin{subequations}\label{krtvn0}
	\begin{align}
	\Bigl|z-\frac{t_2+t_1}{2}\Bigr|= \frac{t_2-t_1}{2}\,,
	\tag{\ref{krtvn0}a}\label{zt1t2vn0}\\
	\intertext{или в эквивалентной форме}
	|z|^2-\Re z\, (t_1+t_2)+t_1t_2=0,
	\tag{\ref{krtvn0}b}\label{krvn0}
	\end{align}		
\end{subequations}
то $\omega\bigl(z,[t_1,t_2]\bigr)=1/2$.
\end{propos}
\begin{proof} 
В случае  \eqref{krt} в обозначениях  \eqref{d1d2}  имеем
\begin{equation*}
d_2d_1=\frac{t_2t_1-\Re z\,(t_2+t_1)+(\Re z)^2}{(\Im z)^2} 
=  \frac{t_2t_1-\Re z\,(t_2+t_1)+ |z|^2}{(\Im z)^2}-1\overset{\eqref{kr}}{>}-1,
\end{equation*}
что позволяет применить формулу разности арктангенсов в \eqref{mult} в форме
\begin{equation*}
\arctg d_2	-\arctg d_1=\arctg\frac{d_2-d_1}{1+d_1d_2}\overset{\eqref{d1d2}}{=}
\arctg\frac{(t_2-t_1)\,\Im z}{|z|^2-\Re z \, (t_1+t_2)+t_1t_2}\,.
\end{equation*}
Правая часть здесь дает  \eqref{eo:a}, а затем и \eqref{eo:b} ввиду возрастания $\arctg$.

В случае  \eqref{krtvn} в обозначениях  \eqref{d1d2}  имеем $d_2d_1<-1$ и $d_2\overset{\eqref{d1d2}}{>}0$,
что позволяет применить формулу разности арктангенсов в \eqref{mult} в форме
\begin{equation*}
\arctg d_2	-\arctg d_1=\pi+\arctg\frac{d_2-d_1}{1+d_1d_2}\overset{\eqref{d1d2}}{=}
\pi+\arctg\frac{(t_2-t_1)\,\Im z}{|z|^2-\Re z \, (t_1+t_2)+t_1t_2}\,.
\end{equation*}
Правая часть здесь дает  \eqref{eovn:a}, а затем и \eqref{eovn:b} ввиду возрастания функции $\arctg$ 
и отрицательности ее значений в правой части \eqref{eovn:a}.

Случай \eqref{krtvn0} по геометрическому смыслу гармонической меры очевиден. 
\end{proof}
\begin{propos}\label{es:qmlw}
	При $b>1$  для точки $z\in \CC^{\up}$,  лежащей  вне открытого  полукруга из  верхней полуплоскости 	с центром 	$(t_2+t_1)/2\in [t_1,t_2]$ радиуса $b(t_2-t_1)/2$, т.\,е.    при 
	условии\/   	{\rm  (ср. с \eqref{zt1t2})}
	\begin{equation}\label{zt1t2b}
	\Bigl|z-\frac{t_2+t_1}{2}\Bigr|\geq b\,\frac{t_2-t_1}{2}\,, \quad z\in \CC^{\up},\quad \text{где \; $b>1$ фиксировано},
	\end{equation}	
	справедлива нижняя оценка
	\begin{subequations}\label{z2lw}
		\begin{align}
		\omega\bigl(z,[t_1,t_2]\bigr)&\geq \frac{b-1}{2\pi b}\,
 \frac{(t_2-t_1)\,\Im z}{|z|^2-\Re z \, (t_1+t_2)+t_1t_2}
\tag{\ref{z2lw}a}\label{z2lwb}\\
&\geq \frac{b-1}{2\pi b}\, \frac{(t_2-t_1)\,\Im z}{\bigl(|z|+t_1\bigr)\bigl(|z|+t_2\bigr)}\,.
\tag{\ref{z2lw}b}\label{z2lwbb}
		\end{align}
	\end{subequations}
\end{propos}
\begin{proof} Очевидная версия равенства \eqref{eo:a} предложения \ref{pr:gm} ---
	\begin{lemma}\label{l:oup} Пусть $r>0$ и $z=te^{i\theta}$ с $t>r$, $0\leq \theta \leq \pi$. Тогда 
	\begin{equation}\label{eo_rz}
	\omega\bigl(z,[-r,r]\bigr)=\frac{1}{\pi}\arctg\frac{2rt\sin \theta}{t^2-r^2},
	\end{equation}
где макcимум при фиксированных $t>r>0$  достигается при $\sin \theta =1$, а  при $t=br$, где $b>1$,  равен
\begin{equation}\label{m:od}
\omega\bigl(ibr,[-r,r]\bigr)=\frac{1}{\pi}\arctg\frac{2b}{b^2-1}\, .
\end{equation} 
\end{lemma}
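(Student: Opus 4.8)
The plan is to obtain Lemma~\ref{l:oup} as a direct specialization of Proposition~\ref{pr:gm} to the symmetric interval $[t_1,t_2]=[-r,r]$. Writing $z=te^{i\theta}$ with $t>r$ and $0\leq\theta\leq\pi$, I first record the elementary relations $\Re z=t\cos\theta$, $\Im z=t\sin\theta$ and $|z|=t$.

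Next I would verify that $z$ lies in the exterior region to which formula~\eqref{eo:a} applies. With $t_1=-r$, $t_2=r$ the center is $(t_1+t_2)/2=0$ and the half-length is $(t_2-t_1)/2=r$, so condition~\eqref{zt1t2} reads $|z|=t>r$, which holds by hypothesis. At the two endpoints $\theta\in\{0,\pi\}$ the point $z$ is real and lies strictly outside $[-r,r]$; there the asserted formula still holds, since $\sin\theta=0$ makes its right-hand side vanish, in agreement with the convention $\omega(x,\cdot)=\delta_x$ for $x\in\RR$.

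Then I substitute into the right-hand side of~\eqref{eo:a}: the numerator becomes $(t_2-t_1)\Im z=2r\,t\sin\theta$, while the denominator becomes
\begin{equation*}
|z|^2-\Re z\,(t_1+t_2)+t_1t_2=t^2-0-r^2=t^2-r^2,
\end{equation*}
which is positive precisely because $t>r$. This yields \eqref{eo_rz} at once.

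For the extremal part, note that for fixed $t>r>0$ the map $\sin\theta\mapsto \dfrac{2rt\sin\theta}{t^2-r^2}$ is strictly increasing on $[0,1]$, since $2rt>0$ and $t^2-r^2>0$, and $\arctg$ is strictly increasing; hence the right-hand side of~\eqref{eo_rz} is maximal exactly when $\sin\theta=1$, i.e. $\theta=\pi/2$ and $z=it$. Finally, putting $t=br$ and $\sin\theta=1$ gives $\dfrac{2r\cdot br}{(br)^2-r^2}=\dfrac{2b}{b^2-1}$, which is~\eqref{m:od}. I do not expect any genuine obstacle here: the whole lemma is a bookkeeping substitution into Proposition~\ref{pr:gm}, the only points deserving a word of care being the verification of the strict exterior condition~\eqref{zt1t2} and the treatment of the degenerate cases $\theta\in\{0,\pi\}$ on the real axis.
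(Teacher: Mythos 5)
Your proposal is correct and follows essentially the same route as the paper: the paper derives Lemma~\ref{l:oup} precisely by applying the first equality \eqref{eo:a} of Proposition~\ref{pr:gm} to $[t_1,t_2]=[-r,r]$, where the exterior condition \eqref{zt1t2} reduces to $|z|=t>r$ and the denominator collapses to $t^2-r^2$, after which the extremal claim follows from monotonicity of $\arctg$ exactly as you argue. Your explicit treatment of the boundary cases $\theta\in\{0,\pi\}$ via the convention $\omega(x,\cdot)=\delta_x$ is a small extra care the paper leaves implicit, but it changes nothing of substance.
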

Заключение леммы \ref{l:oup} о максимуме  с равенством \eqref{m:od} для отрезка $[t_1,t_2]$ 	означает, что 
 при всех $z\in \CC^{\up}$, удовлетворяющих условию \eqref{zt1t2b}, в обозначении 
 \begin{equation}\label{xfr}
 x:=\frac{(t_2-t_1)\,\Im z}{|z|^2-\Re z \, (t_1+t_2)+t_1t_2}
  \end{equation}
  из геометрического смысла гармонической меры и возрастания функции $\arctg$ следует
  $0\leq x\leq \dfrac{2b}{b^2-1}$. Тогда 
 в силу вогнутости функции $\arctg$ на $\RR^+$
 \begin{equation}\label{arcx}
\arctg x\geq \frac{\arctg\frac{2b}{b^2-1}}{\frac{2b}{b^2-1}} \,x 
\quad \text{при} 
\quad 0\leq x\leq \dfrac{2b}{b^2-1}\,,
 \end{equation}
 откуда согласно  неравенствам 
\begin{equation*} 
\frac{1}{b-1}\leq\frac{2b}{b^2-1}\leq \frac{2}{b-1}\quad\text{при $b>1$}
 \end{equation*} 
 следует 
 \begin{equation*}
 \arctg x\overset{\eqref{arcx}}{\geq} 
 \frac{1}{2}\Bigl((b-1)\arctg\frac{1}{b-1}\Bigr)\cdot x\geq \frac12\frac{b-1}{b}\, x
 \quad \text{при} 
 \quad 0\leq x\leq \dfrac{2b}{b^2-1}\,,
 \end{equation*}
 где последнее неравенство основано на неравенстве $\arctg \frac{1}{y}\geq \frac{1}{1+y}$, справедливом при всех $y=b-1> 0$.
 Отсюда ввиду \eqref{xfr} получаем оценку снизу \eqref{z2lwb}. Для перехода 
 к   \eqref{z2lwbb} оценим сверху последний  знаменатель  в \eqref{z2lwb}:
 \begin{equation*}
 0 < |z|^2-\Re z\, (t_1+t_2)+t_1t_2\leq |z|^2+|z|\bigl(|t_1|+|t_2|\bigr)+|t_1||t_2|
 =\bigl(|z|+t_1\bigr)\bigl(|z|+t_2\bigr),
 \end{equation*}
 что и завершает доказательство.
 \end{proof}

Следующее утверждение уточняет \cite[лемма 1.1.1]{KhDD92} и \cite[лемма 1.1]{Kha91}  в части оценок сверху. 
\begin{propos}\label{pr:gmA} Пусть  $-\infty<t_1<t_2<+\infty$, $z\in \CC^{\up}$,  $a\in (0,1)$. Из
\begin{equation}\label{minM}
a|z|\geq \max\bigl\{|t_1|,|t_2|\bigr\}
\end{equation}
следует  оценка 
\begin{equation}\label{afminAM}
\omega\bigl(z,[t_1,t_2]\bigr)\leq \frac{t_2-t_1}{\pi (1-a)^2}\, \Im \frac{1}{\bar z}\,,
\end{equation}
а при условии 
\begin{equation}\label{min}
	 a\cdot \min\limits_{t\in [t_1, t_2]} |t|\geq |z|
 \end{equation}
---  оценка
\begin{equation}\label{afmin}
	\omega\bigl(z,[t_1,t_2]\bigr)\leq \frac{(t_2-t_1)\,a^2}{\pi (1-a)^2}\, \Im \frac{1}{\bar z}\,,
\end{equation}
\end{propos}
\begin{proof}  Левую часть \eqref{kr} можно оценить снизу:
\begin{equation}\label{zsn}
	|z|^2-\Re z\, (t_1+t_2)+t_1t_2\geq |z|^2-|z|\, (t_1+t_2)+t_1t_2=\bigl(t_1-|z|\bigr)\,\bigl(t_2-|z|\bigr).
\end{equation}
При условии \eqref{minM} оба сомножителя в правой части 
\eqref{zsn} отрицательны, т.\,е. выполнено условие \eqref{kr}. Следовательно, 
\begin{equation*}
\omega\bigl(z,[t_1,t_2]\bigr)\overset{\eqref{zsn}, \eqref{eo:b}}{\leq} \frac{1}{\pi}\,\frac{(t_2-t_1)\,\Im z}{\bigl(|z|-t_1\bigr)\,\bigl(|z|-t_2\bigr)}
\overset{\eqref{minM}}{\leq} \frac{1}{\pi}\,\frac{(t_2-t_1)\,\Im z}{\bigl(|z|-a|z|\bigr)\,\bigl(|z|-a|z|\bigr)}\,, 
\end{equation*}
где правая часть совпадает с правой частью в \eqref{afminAM}. 
Условие \eqref{min}, в частности, означает, что точки $t_1,t_2\in \RR$ расположены по одну сторону от нуля и пара сомножителей в правой  части \eqref{zsn} одинакового знака, т.\,е. выполнено условие \eqref{kr}. Следовательно, 
\begin{equation*}
	\omega\bigl(z,[t_1,t_2]\bigr)\overset{\eqref{zsn}}{\leq} \frac{1}{\pi}\,\frac{(t_2-t_1)\,\Im z}{\bigl(t_1-|z|\bigr)\,\bigl(t_2-|z|\bigr)}
	\overset{\eqref{min}}{\leq} \frac{1}{\pi}\,\frac{(t_2-t_1)\,\Im z}{\bigl(|z|/a-|z|\bigr)\,\bigl(|z|/a-|z|\bigr)}\,, 
\end{equation*}
где правая часть совпадает с правой частью в \eqref{afmin}. 
\end{proof}
Следующее утверждение уточняет \cite[лемма 1.1.1]{KhDD92} и  \cite[лемма 1.1(1), (1.2)]{Kha91}  в части оценки снизу.
\begin{propos}\label{pr:gmA+} В обозначениях предложения\/ {\rm \ref{pr:gmA}} 
	при условии\/ \eqref{minM}

имеет место нижняя оценка 
	\begin{equation}\label{afminAM-}
	\omega\bigl(z,[t_1,t_2]\bigr)\geq (t_2-t_1) \frac{1-a}{8\pi}\,
\Im \frac{1}{\bar z}\,.
	\end{equation}
\end{propos}

\begin{proof} При условии \eqref{minM} элементарные геометрические рассуждения показывают, что выполнено  \eqref{zt1t2b} с $b:=1/a$. Кроме того,  при условии  \eqref{minM}, оставаясь в рамках  \eqref{kr}, имеем оценку сверху 
\begin{equation}\label{eo:b-}
0 < \bigl(|z|+t_1\bigr)\bigl(|z|+t_2\bigr)
\overset{\eqref{minM}}{\leq} |z|^2(1+a)^2.  
\end{equation}
Отсюда по предложению \ref{es:qmlw} из оценки \eqref{z2lwbb} с $b=1/a$ получаем 
\begin{equation*}
\omega\bigl(z,[t_1,t_2]\bigr)\geq 
\frac{1/a-1}{2\pi/a}\,\frac{(t_2-t_1)\,\Im z}{|z|^2(1+a)^2}
=\frac{1-a}{2\pi(1+a)^2}\,(t_2-t_1)\,\Im \frac{1}{\bar z}\, ,
\end{equation*}
что  в силу $0< a<1$ дает нижнюю оценку \eqref{afminAM-}.
\end{proof}

Для точек $z\in \CC^{\up}$  их  аргументы $\arg z$ берутся из интервала $(0,\pi)$.
\begin{propos}\label{pr:gmAa} Пусть  $0\leq t_1<t_2<+\infty$, $z\in \CC^{\up}$. Из
\begin{equation}\label{mina}
\pi <\arg z\leq \frac{\pi}{2}, \quad\text{т.\,е. $\cos \arg z\leq 0$,}
 \end{equation}
 следует  оценка сверху
\begin{equation}\label{afmina}
	\omega\bigl(z,[t_1,t_2]\bigr)\leq (t_2-t_1)\, \frac{1}{\pi}\Im \frac{1}{\bar z}\,.
\end{equation}
Из условия 
\begin{equation}\label{minMa}
-1< \cos \arg z<\frac{2\sqrt{t_1t_2}}{t_1+t_2} 
\end{equation}
следует оценка сверху
\begin{equation}\label{afminAMa}
	\omega\bigl(z,[t_1,t_2]\bigr)\leq\frac{( t_2-t_1)\, \Im z}{\pi \bigl(|z|-\sqrt{t_1t_2}\,\bigr)^2}\,.
\end{equation}
Пусть для некоторого $a\in (0,1)\subset\RR^+$ выполнено  условие
\begin{equation}\label{minMaa}
-1< \cos \arg z\leq \frac{2a\sqrt{t_1t_2}}{t_1+t_2} \,.
\end{equation}
Тогда имеет место оценка сверху
\begin{equation}\label{afminAMaa}
	\omega\bigl(z,[t_1,t_2]\bigr)\leq\frac{ t_2-t_1}{\pi \bigl(1-a^2\bigr)}\, \Im \frac{1}{\bar z}\,.
\end{equation}
\end{propos}
\begin{proof} При условии  \eqref{mina} для $[t_1,t_2]\subset \RR^+$ имеем
\begin{equation*}
	|z|^2-\Re z\, (t_1+t_2)+t_1t_2\geq |z|^2>0,
\end{equation*}
т.\,е. выполнено \eqref{kr} и по предложению \ref{pr:gm}  из  \eqref{eo:b} сразу следует \eqref{afmina}.

При условии \eqref{minMa} 
\begin{equation}\label{sqrt}
|z|^2-\Re z\, (t_1+t_2)+t_1t_2> |z|^2-2|z|\sqrt{t_1t_2}+t_1t_2=\bigl(|z|-\sqrt{t_1t_2}\bigr)^2\geq 0,
\end{equation}
т.\,е. выполнено \eqref{kr} и по предложению \ref{pr:gm}  из  \eqref{eo:b} сразу следует \eqref{afminAMa}.
Вариация оценки снизу  \eqref{sqrt} при чуть более жестком условии \eqref{minMaa} дает 
\begin{multline}\label{zsnizu}
	|z|^2-\Re z\, (t_1+t_2)+t_1t_2\geq  |z|^2-2a|z|\sqrt{t_1t_2}+t_1t_2
	\\
	=(1-a^2)|z|^2+\bigl(a|z|-\sqrt{t_1t_2}\bigr)^2\geq (1-a^2)|z|^2>0,
\end{multline}
откуда по предложению \ref{pr:gm}  из  \eqref{eo:b} получаем \eqref{afminAMaa}.
\end{proof}

\begin{propos}\label{r:+} Пусть $0<t_1<t_2 <+\infty$ и для $z\in \CC^{\up}$ выполнено условие 
	\eqref{minMa}. 		Тогда справедлива оценка снизу
	\begin{equation}\label{essnizo}
	\omega\bigl(z,[t_1,t_2]\bigr)\geq \frac{1}{8\pi}\frac{t_1}{t_2}\, (t_2-t_1)\,\Im \frac{1}{\bar z}\, .
	\end{equation}
В частности, если $t_2\leq At_1$ для некоторого числа $A>1$, то 
	\begin{equation}\label{essnizoA}  
	\omega\bigl(z,[t_1,t_2]\bigr)\geq \frac{1}{8\pi A}\, (t_2-t_1)\,\Im \frac{1}{\bar z}\, .
	\end{equation}
\end{propos}	
\begin{proof}	
		 Предположим сначала, что точка $z$ такова, что выполнено условие 
	 \eqref{minMaa} при некотором выбранном $a\in (0,1)$.  Из элементарных геометрических построений при  $0<t_1<t_2$ и условии \eqref{minMaa} нетрудно вывести, что точка $z$ лежит вне полукруга из предложения \ref{es:qmlw},  т.\,е. выполнено условие \eqref{zt1t2b} предложения \ref{es:qmlw} с числом 
	 \begin{equation}\label{dfb}
	 b:=\sqrt{1+\frac{4(1-a^2)t_1t_2}{(t_2-t_1)^2}}>1\, .
	 \end{equation}
	Тогда имеют место оценки снизу 	\eqref{z2lw} с выбранным в  \eqref{dfb} значением $b>1$. В частности, из 
	\eqref{z2lwb} в сочетании с оценкой \eqref{zsnizu}, выполненной при условии \eqref{minMaa}, имеем 
		\begin{equation}\label{z2lwb+-}
	\omega\bigl(z,[t_1,t_2]\bigr)\geq \frac{b-1}{2\pi b}\, \frac{(t_2-t_1)\,\Im z}{(1-a^2)|z|^2}\geq
	 \Bigl(1-\frac{1}{b}\Bigr)\frac{1}{4\pi (1-a)}\, (t_2-t_1)\,\Im \frac{1}{\bar z}	\, .
	\end{equation}
Оценим снизу первый сомножитель в правой части \eqref{z2lwb+-}:
\begin{equation}\label{1fr1b}
1-\frac{1}{b}= 1-\frac{1}{\sqrt{1+\frac{4(1-a^2)t_1t_2}{(t_2-t_1)^2}}}\geq 
1-\frac{1}{\sqrt{1+4(1-a^2){t_1}/{t_2}}}
\end{equation}
Здесь 
$0<x:=4(1-a^2)\,{t_1}/{t_2}<4$ и 
 при $x\in [1,4]$ имеет место элементарное неравенство 
$1-\frac{1}{\sqrt{1+x}}\geq \frac{1}{8}\, x$,
откуда можем продолжить оценку снизу для правой части \eqref{1fr1b} в виде
\begin{equation*}
1-\frac{1}{b}\geq \frac{1}{8} \, 4(1-a^2)\,\frac{t_1}{t_2} \geq \frac{1-a}{2} \frac{t_1}{t_2}\, .
\end{equation*} 
Используя эту оценку в  правой части \eqref{z2lwb+-}, получаем требуемую оценку  \eqref{essnizo} для произвольного $a\in (0,1)$. Поскольку правая часть в  \eqref{essnizo} не зависит от $a$, можем в условии \eqref{minMaa} по непрерывности устремить $a$ к единице. Таким образом, оценка  \eqref{essnizo} выполнена и при условии  	\eqref{minMa}.
\end{proof}

\begin{remark}\label{r:t1t2+-}
	Условие $0\leq t_1<t_2<+\infty$ предложения \ref{pr:gmAa} легко адаптируется на случай $-\infty<t_1<t_2\leq 0$. 
Таким образом, при $a\in (0,1)$ как в случае $0\leq t_1<t_2<+\infty$, так и в случае $-\infty<t_1<t_2\leq 0$  в той или иной мере  неохваченным оценками предложений \ref{pr:gmA} и \ref{pr:gmAa} осталось множество 
\begin{equation}\label{xset}
\left\{z\in \overline{\CC}^{\up}\colon \; at_1 \leq |z|<\frac{1}{a}\,t_2,\;
\cos \arg z\geq \frac{2a\sqrt{t_1t_2}}{t_1+t_2}\, \right\},
\end{equation}
а для  $-\infty<t_1\leq 0\leq t_2<+\infty$, $t_1\neq t_2$, с тем же $a\in (0,1)$ --- круг 
\begin{equation}\label{xsetd}
D\bigl(0,\max\{|t_1|, |t_2|\}/a\bigr).
\end{equation}
\end{remark}

Из элементарной геометрии, к примеру, из теоремы косинусов, легко следует
\begin{propos}\label{prxset} В описанных в замечании\/ {\rm \ref{r:t1t2+-}} случаях множество \eqref{xset} и круг
\eqref{xsetd}	содержатся в круге
\begin{equation}\label{DsD}
	D\bigl((t_1+t_2)/2, (a+1/a)\max\{|t_1|,|t_2|\}\bigr)\supset D\bigl(\max\{|t_1|,|t_2|\}\bigr).
\end{equation}
\end{propos}

Следующее утверждение заполняет отмеченный в замечании \ref{r:t1t2+-}  пробел и дает в определенном смысле точные  оценки через длину (радиус) отрезка $[t_1,t_2]$.
\begin{propos}\label{pr:eshmb} Пусть  $0\leq t_1<t_2<+\infty$, $r:=(t_2-t_1)/2$ --- радиус
	отрезка $[t_1,t_2]\subset \RR^+$ с центром $t_0:=(t_1+t_2)/2$, 
	Тогда для всех $z\in \overline{\CC}^{\up}$ из замкнутой верхней полуплоскости справедливы  оценки сверху
	\begin{equation}\label{es:upom}
	\hspace{-2mm}	
	\omega\bigl(z,[t_1,t_2]\bigr)\leq \begin{cases}
		  1  \quad\text{при всех $z\in {D}(t_0,r)\cup\{t_1\}\cup \{t_2\} $},\\
				  {1}/{2}  \quad\text{при всех $z\in \CC^{\up} \cap \partial {D}(t_0,r)$},\\
		\dfrac{1}{\pi}\, \arctg \dfrac{2r|z-t_0|}{|z-t_0|^2-r^2} 
		\leq 		\dfrac{2r|z-t_0|}{\pi\bigl(|z-t_0|^2-r^2\bigr)}\,,\;z\notin \overline{D}(t_0,r),
\end{cases}
\quad z\in \overline{\CC}^{\up},
	\end{equation}
	которые для каждого $t\in \RR^+$ точны на каждой верхней полуокружности
	\begin{equation}\label{df:asc}
	\partial D^{\up}(t_0,t):=\bigl\{z\in \overline{\CC}^{\up} \colon |z-t_0|=t\bigr\},
	\quad  D^{\up}(t_0,t):={\CC}^{\up}\cap  D(t_0,t).
	\end{equation}
	Кроме того, при всех $z\overset{\eqref{df:asc}}{\in} D^{\up}(t_0,r) $ имеет место оценка снизу
	\begin{equation}\label{essn_o}
		\omega\bigl(z,[t_1,t_2]\bigr)\geq 1-\dfrac{1}{\pi}\, \arctg \dfrac{2r|z-t_0|}{r^2-|z-t_0|^2}
		\geq 1-\dfrac{1}{\pi}\, \dfrac{2r|z-t_0|}{r^2-|z-t_0|^2} \,.
	\end{equation}
	\end{propos}
	\begin{proof}
Верхняя оценка через $1$ очевидна, поскольку гармоническая мера вероятностная, и это значение при $z\in  	\partial D(t_0,t)^{\up}$ с $t<r$  достигается на верхней полуокружности \eqref{df:asc}  при $z=t_0\pm t=t_0\pm|z-t_0|\in [t_1,t_2]$. При этом на верхней 
полуокружности $\CC^{\up} \cap \partial {D}(t_0,r)$ из геометрического смысла гармонической  меры левая часть \eqref{es:upom} тождественно равна $1/2$. 
 При $z\notin  \clos {D}^{\up}(t_0,t) $ с помощью сдвига $z\mapsto z-t_0$ удобно перейти 
 к отрезку $ [-r,r]$, $t_0=0$, $|z-t_0|=t$
 и воспользоваться леммой \ref{l:oup} с $\sin \theta =1$, когда максимум достигается. Это  доказывает как последние оценки из  \eqref{es:upom}, так и ее точность.
 
После сдвига $z\mapsto z-t_0$, $z\in \overline\CC^{\up}$, для доказательства оценки снизу \eqref{essn_o}  достаточна   версия равенства \eqref{eovn:a} предложения \ref{pr:gm} (ср. с леммой  \ref{l:oup}) ---
\begin{lemma}\label{l:oups} Пусть\/ $r>0$ и\/ $z=te^{i\theta}$ с\/ $0<t<r$, $0\leq \theta \leq \pi$. Тогда 
	\begin{equation*}
	\omega\bigl(z,[-r,r]\bigr)=1+\frac{1}{\pi}\arctg\frac{2rt\sin \theta}{t^2-r^2}
	=1-\frac{1}{\pi}\arctg\frac{2rt\sin \theta}{r^2-t^2},
	\end{equation*}
	где минимум при фиксированных $r>t>0$  достигается при $\sin \theta =1$.
\end{lemma}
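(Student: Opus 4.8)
The plan is to read Lemma~\ref{l:oups} off as the ``interior'' companion of Lemma~\ref{l:oup}, specializing the first equality~\eqref{eovn:a} of Proposition~\ref{pr:gm} to the symmetric segment $[t_1,t_2]=[-r,r]$. The hypothesis $0<t<r$ says exactly that $z=te^{i\theta}$ lies strictly inside the disk having $[-r,r]$ as a diameter, which is the regime~\eqref{krtvn}--\eqref{krvn} in which~\eqref{eovn:a} is valid (Lemma~\ref{l:oup} treated the exterior regime~\eqref{krt}). Thus no fresh harmonic-measure computation is required; the proof is a substitution followed by an elementary sign manipulation.

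Concretely, I would first verify the hypothesis of~\eqref{eovn:a}: with $t_1=-r$, $t_2=r$ one has $(t_1+t_2)/2=0$ and $(t_2-t_1)/2=r$, so~\eqref{zt1t2vn} reduces to $|z|=t<r$, and equivalently the left-hand side of~\eqref{krvn} is $|z|^2-\Re z\,(t_1+t_2)+t_1t_2=t^2-r^2<0$. Hence for $z\in\CC^{\up}$ (that is, $0<\theta<\pi$) Proposition~\ref{pr:gm} applies, and inserting $|z|^2=t^2$, $t_1+t_2=0$, $t_1t_2=-r^2$, $t_2-t_1=2r$ and $\Im z=t\sin\theta$ into~\eqref{eovn:a} turns the argument of $\arctg$ into $2rt\sin\theta/(t^2-r^2)$, which is the first asserted equality. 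For the second equality I would use $t^2-r^2<0$ to write $2rt\sin\theta/(t^2-r^2)=-2rt\sin\theta/(r^2-t^2)$ and then apply the oddness $\arctg(-x)=-\arctg x$, passing from the ``$1+\cdots$'' form to the ``$1-\cdots$'' form.

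Two small points finish the argument. For the boundary values $\theta\in\{0,\pi\}$ the point $z=\pm t$ lies on $\RR$ strictly inside $(-r,r)\subset[-r,r]$, so the boundary normalization $\omega(x,\cdot)=\delta_x(\cdot)$ recorded after~\eqref{seom:a} gives $\omega(z,[-r,r])=\delta_{\pm t}([-r,r])=1$; since $\sin\theta=0$ makes both arctangents vanish, the displayed formula extends by continuity to every $z\in\overline{\CC}^{\up}$ with $0<t<r$. As for the extremal claim, for fixed $r>t>0$ the map $\theta\mapsto\omega(z,[-r,r])=1-\frac1\pi\arctg\bigl(2rt\sin\theta/(r^2-t^2)\bigr)$ is strictly decreasing in $\sin\theta\in[0,1]$ (because $r^2-t^2>0$ and $\arctg$ increases), so the value relevant for the lower estimate~\eqref{essn_o} of Proposition~\ref{pr:eshmb}, namely the extremum of $\omega$ over $0\le\theta\le\pi$, is attained at $\sin\theta=1$, i.e.\ at $z=it$. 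I expect no genuine obstacle: the lemma is a direct specialization of Proposition~\ref{pr:gm}, and the only places needing care are the sign bookkeeping between the two arctangent forms and the separate, elementary treatment of the real endpoints $\theta=0,\pi$.
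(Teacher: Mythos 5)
Your argument is correct and coincides with the paper's: the lemma is presented there precisely as a direct specialization of the identity \eqref{eovn:a} of Proposition \ref{pr:gm} to the symmetric segment $[-r,r]$ in the interior regime \eqref{krtvn}, and your sign bookkeeping, boundary normalization at $\theta\in\{0,\pi\}$, and monotonicity argument for the extremal claim at $\sin\theta=1$ fill in exactly the routine details the paper leaves implicit. Nothing further is needed.
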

Это завершает доказательство.
\end{proof}
\subsection{Классическое выметание из полуплоскости и условие Бляшке для полуплоскости}	
 
 Все характеристики  роста (соответственно исчезания) и понятия принадлежности классу (ра-)\-с\-х\-о\-д\-и\-м\-о\-с\-ти около $\infty$ (соответственно нуля) 
из \S~\ref{sss:sh}  для заряда $\nu\in \mathcal M(\CC)$  полностью   определяются одноименными 
характеристиками и понятиями для сужений возрастающей функции $|\nu|^{\rad}$ на какой-либо  луч $[r_0,+\infty)$ (соответственно интервал $(0,r_0]$), где $0<r_0\in \RR^+$. 
При этом фразы {\it <<около\/ $0$>>\/} и  {\it <<около\/ $+\infty$>>\/}
эквивалентны фразам {\it <<около нуля>>\/} и {\it <<около бесконечности>>\/}
или {\it <<около\/ $\infty$>>.\/}

Характеристики роста и исчезания  заряда $\nu\in \mathcal M(\CC)$ дополняет 
\begin{definition}\label{c:akh}
	Заряд $\nu\in  \mathcal M(\CC)$  удовлетворяет {\it условию Бляшке около бесконечности для верхней полуплоскости\/}  $\CC^{\up}$ (см. и ср. с \cite[гл.~II, 2]{Garnett} и с определением функций класса A из \cite[гл.~V]{Levin56}),  если 
\begin{equation}\label{ineq:akh}
\int\limits_{\CC^{\up}\setminus D(r_0)} \Im \frac{1}{\bar z} \dd |\nu|(z)<+\infty \quad\text{для некоторого $r_0\in \RR^+_*$,}
\end{equation}
где подынтегральная функция в  \eqref{ineq:akh}, очевидно, удовлетворяет условиям
\begin{equation}\label{co:imz} 
0\leq  \Im \frac{1}{\bar z}\leq \frac{1}{|z|}\quad\text{для всех  $z\in \CC^{\up}_*$.}
\end{equation}
Заряд $\nu\in  \mathcal M(\CC)$  удовлетворяет {\it условию Бляшке около нуля для верхней полуплоскости\/ $\CC^{\up}$,}  если 
\begin{equation}\label{ineq:akh0}
\int\limits_{\CC^{\up}\cap D_*(r_0)} \Im \frac{1}{\bar z} \dd |\nu|(z)<+\infty \quad\text{для некоторого $r_0\in \RR^+_*$.}
\end{equation}
\end{definition}
Отметим очевидные следствия неравенств \eqref{co:imz}
и предложений \ref{pr:fcc}\eqref{i:i1} и \ref{pr:fcc0}\eqref{i:i10}.
		\begin{propos}\label{pr:2st} Если сужение $\nu\bigm|_{\CC^{\up}}$ заряда  $\nu\in \mathcal M(\CC)$ принадлежит классу сходимости при порядке  $1$ около бесконечности (нуля), то заряд $\nu$ удовлетворяет условию Бляшке для верхней полуплоскости около бесконечности (соответственно нуля).
\end{propos}
\subsection{ Классическое выметание заряда из верхней полуплоскости}\label{sss:bal} 

\begin{definition}\label{df:clbal} Пусть $\nu\in \mathcal M(\CC)$. Заряд $\nu^{\bal}\in \mathcal M(\CC)\in \mathcal M(\CC)$, определяемый  на каждом борелевском подмножестве $B\in  \mathcal B(\CC)$ равенством 
	\begin{equation}\label{df:bal0up}
	\nu^{\bal}(B):=\int_{\CC^{\up}} \omega(z, B\cap \CC^{\up})\dd \nu(z)+
	\nu\bigl(B\cap {\overline{\CC}_{\lw}}\bigr),
	\end{equation} 
	называем {\it выметанием заряда\/ $\nu$
	из верхней полуплоскости\/} $\CC^{\up}$ на замкнутую нижнюю полуплоскость $\overline{\CC}_{\lw}:=\CC\setminus \CC^{\up}$.
\end{definition} 
Отметим, что выметания $\nu^{\bal}$  положительной меры $\nu\in \mathcal M^+(\CC)$ из определения \ref{df:clbal} при условии ее существования, т.\,е. конечности на множествах   $B\in \mathcal B_{\rm b} (S)$, также определяет положительную меру, сосредоточенную на замкнутой нижней полуплоскости $\overline{\CC}_{\lw}$.
\begin{thm}\label{th:cup} Пусть  заряд $\nu\in \mathcal M(\CC)$ 
	удовлетворяет условию Бляшке около бесконечности для $\CC^{\up}$. Тогда определено выметание $\nu^{\bal}$	из $\CC^{\up}$. 
При этом для функции распределения $\bigl|\nu^{\bal}\bigr|^{\RR}$ на $\RR$ из \eqref{nuR},  $|\nu^{\bal}|$ 
для любых 
\begin{subequations}\label{0rt0}
	\begin{align}
 -\infty<t_1<t_2<+\infty, &\quad a\in (0,1)\subset \RR^{+}
	\tag{\ref{0rt0}a}\label{{0rt0}a}
	\\
\text{при}\quad   t_0:=\frac{t_1+t_2}{2}\, , \quad r&:=\frac{t_2-t_1}{2}, \quad T:=\max\bigl\{|t_1|, |t_2|\bigr\}
\tag{\ref{0rt0}b}\label{{0rt0}b}\\
\text{в обозначении}\quad \nu^{\overline{\up}}&:=\nu\bigm|_{\overline{\CC}^{\up}}
\quad\text{ для  сужения заряда $\nu$} 
\tag{\ref{0rt0}c}\label{{0rt0}c}
\end{align}
\end{subequations} 
 на замкнутую верхнюю полуплоскость имеет место оценка
\begin{multline}\label{nu:estT}
\bigl|\nu^{\bal}\bigr|^{\RR}(t_2)-\bigl|\nu^{\bal}\bigr|^{\RR}(t_1)\leq
|\nu^{\overline{\up}}|(t_0,r)+|\nu^{\overline{\up}}|\bigl(\{t_1, t_2\}\bigr)
+\frac{r(a+1/a)}{T}|\nu^{\overline{\up}}|\bigl(t_0,(a+1/a)T\bigr)
\\
+\frac{2r}{\pi (1-a)^2}\int\limits_{|z|\geq T
	}\Bigl|\Im \frac{1}{z}\Bigr|\dd |\nu^{\overline{\up}}|(z)
+\frac{2r}{\pi}\int\limits_r^{(a+1/a)T}\frac{|\nu^{\overline{\up}}|(t_0,t)}{t^2+r^2}\dd t.
\end{multline}
\end{thm}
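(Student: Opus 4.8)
The plan is to establish existence of $\nu^{\bal}$, reduce the increment $\bigl|\nu^{\bal}\bigr|^{\RR}(t_2)-\bigl|\nu^{\bal}\bigr|^{\RR}(t_1)$ to a single integral of harmonic measure against $|\nu^{\overline{\up}}|$, and then estimate that integral by splitting $\overline{\CC}^{\up}$ into three zones determined by the position of $z$ relative to the disk $\overline{D}(t_0,r)$ on the diameter $[t_1,t_2]$ and the larger disk $\overline{D}(t_0,R)$ with $R:=(a+1/a)T$. First, for a bounded interval the defining integral in \eqref{df:bal0up} converges: for $|z|\geq T/a$ one has $\omega\bigl(z,[t_1,t_2]\bigr)\leq\frac{2r}{\pi(1-a)^2}\,\Im\frac{1}{\bar z}$ by \eqref{afminAM}, so the tail is controlled by the class $A$ hypothesis \eqref{ineq:akh}, while on the bounded set $\{|z|<T/a\}$ one uses $\omega\leq 1$ and local finiteness of $|\nu|$. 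Hence $\nu^{\bal}\in\mathcal M(\CC)$ exists. Since balayage is linear and carries $\mathcal M^+(\CC)$ into $\mathcal M^+(\CC)$, the decomposition $\nu=\nu^+-\nu^-$ gives $\nu^{\bal}=(\nu^+)^{\bal}-(\nu^-)^{\bal}$, a difference of nonnegative measures, whence $\bigl|\nu^{\bal}\bigr|\leq|\nu|^{\bal}$. By \eqref{nuR} the increment equals the $\bigl|\nu^{\bal}\bigr|$-mass of the real segment joining $t_1$ and $t_2$, so it is at most $\bigl|\nu^{\bal}\bigr|\bigl([t_1,t_2]\bigr)\leq|\nu|^{\bal}\bigl([t_1,t_2]\bigr)$; and because $\omega(x,\cdot)=\delta_x$ for $x\in\RR$, the contribution $|\nu|\bigl([t_1,t_2]\bigr)$ of $\overline{\CC}_{\lw}$ in \eqref{df:bal0up} is reabsorbed into the harmonic-measure integral, giving
\[
|\nu|^{\bal}\bigl([t_1,t_2]\bigr)=\int_{\overline{\CC}^{\up}}\omega\bigl(z,[t_1,t_2]\bigr)\dd|\nu^{\overline{\up}}|(z).
\]

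For the far zone and the disk: by Remark \ref{r:t1t2+-} the set on which the estimates of Propositions \ref{pr:gmA} and \ref{pr:gmAa} are unavailable lies, in every sign configuration of $t_1,t_2$, inside $D(t_0,R)$, and by \eqref{DsD} that disk contains $D(0,T)$; hence $\{|z-t_0|>R\}\subset\{|z|\geq T\}$ and on this zone $\omega\bigl(z,[t_1,t_2]\bigr)\leq\frac{2r}{\pi(1-a)^2}\,\Im\frac{1}{\bar z}$, the worst of the coefficients $1,(1-a^2)^{-1},(1-a)^{-2}$ produced by those propositions. Integrating over the larger set $\{|z|\geq T\}$, where $\Im\frac{1}{\bar z}=\bigl|\Im\frac{1}{z}\bigr|$ by \eqref{co:imz}, yields the fourth term of \eqref{nu:estT}. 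On the disk $\overline{D}(t_0,r)$ I use $\omega\leq 1$ (the first two lines of \eqref{es:upom}, with value $1/2$ on the upper semicircle and $1$ at the atoms $t_1,t_2$), producing the first term $|\nu^{\overline{\up}}|(t_0,r)$ together with the endpoint term $|\nu^{\overline{\up}}|\bigl(\{t_1,t_2\}\bigr)$.

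The remaining zone $\{r<|z-t_0|\leq R\}$ is where the work concentrates. Here the third line of \eqref{es:upom} gives $\omega\bigl(z,[t_1,t_2]\bigr)\leq\frac{1}{\pi}\arctg\frac{2r|z-t_0|}{|z-t_0|^2-r^2}=\frac{2}{\pi}\arctg\frac{r}{|z-t_0|}$, and I write $\frac{2}{\pi}\arctg\frac{r}{s}=\int_s^{\infty}\frac{2r}{\pi(u^2+r^2)}\dd u$ and apply Fubini with the radial function $m(t):=|\nu^{\overline{\up}}|(t_0,t)$ of \eqref{df:nup}. This turns the annulus integral into $\int_r^{R}\frac{2r}{\pi(u^2+r^2)}\bigl(m(u)-m(r)\bigr)\dd u$ plus a tail $\bigl(m(R)-m(r)\bigr)\int_R^{\infty}\frac{2r}{\pi(u^2+r^2)}\dd u$; the first summand is dominated by the fifth term $\frac{2r}{\pi}\int_r^{R}\frac{m(t)}{t^2+r^2}\dd t$, while the tail, bounded by $\frac{2}{\pi}\arctg\frac{r}{R}\cdot m(R)\leq\frac{2r}{\pi R}\,m(R)=\frac{2r}{\pi(a+1/a)T}\,m(R)\leq\frac{r(a+1/a)}{T}\,m(R)$ (using $(a+1/a)^2\geq 4>2/\pi$), gives the third term. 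Summing the three zones produces \eqref{nu:estT}.

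The delicate point is not any single calculation but the geometric bookkeeping of the zones: one must confirm that they exhaust $\overline{\CC}^{\up}$ and that the appropriate pointwise bound is legitimately available in each, simultaneously for $0\leq t_1<t_2$, $t_1<t_2\leq 0$ and $t_1\leq 0\leq t_2$ — which is possible only through the uniform containment supplied by Remark \ref{r:t1t2+-} and \eqref{DsD}. Equally, the Fubini step must be organized so as to avoid the singularity of the linearized kernel at $|z-t_0|=r$, which is exactly why the bounded form $\frac{2}{\pi}\arctg\frac{r}{|z-t_0|}$ is integrated before being linearized, and why the radial cutoff is placed at $R$ rather than at $T/a$. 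Convergence of every integral is guaranteed throughout by the class $A$ condition, in the spirit of Proposition \ref{pr:2st}.
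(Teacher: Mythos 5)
Your proof is correct and follows essentially the same route as the paper: the same reduction to $\int_{\overline{\CC}^{\up}}\omega\bigl(z,[t_1,t_2]\bigr)\dd|\nu^{\overline{\up}}|$, the same three-zone split governed by $\overline{D}(t_0,r)$ and $D\bigl(t_0,(a+1/a)T\bigr)$ via Remark \ref{r:t1t2+-} and Proposition \ref{prxset}, the same pointwise bounds from Propositions \ref{pr:gmA}, \ref{pr:gmAa} and \ref{pr:eshmb}, and an annulus computation that is the paper's integration by parts recast as a layer-cake/Fubini argument. The only deviations are cosmetic: you bound $\omega\leq 1$ on the whole closed disk instead of exploiting the $1/2$ value on the upper semicircle against the boundary term of the integration by parts, which still lands within the stated right-hand side of \eqref{nu:estT}.
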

\begin{proof}
	Не умаляя общности, можно считать, что 
	$\supp \nu \subset \overline{\CC}^{\up}$, т.\,е.  $\nu^{\overline{\up}}\overset{\eqref{{0rt0}c}}{=}\nu$.
Доказательство оценки \eqref{nu:estT} и будет означать, что определено выметание $\nu^{\bal}$ из $\CC^{\up}$.
	Объединения множества \eqref{xset} с кругом \eqref{xsetd}
	из замечания \ref{r:t1t2+-}   по предложению \ref{prxset}  содержится  в большем круге из  \eqref{DsD}, совпадающем в обозначениях \eqref{{0rt0}b}  с кругом $D\bigl(t_0, (a+1/a)T\bigr)$. Следовательно,  
	по предложениям \ref{pr:gmA} с оценкой \eqref{afminAM} и \ref{pr:gmAa} с оценкой \eqref{afminAMaa}
	получаем 
	\begin{equation}\label{afminAMg}
	\omega\bigl(z,[t_1,t_2]\bigr)\overset{\eqref{afminAM}, \eqref{afminAMaa}}{\leq} \frac{t_2-t_1}{\pi (1-a)^2}\, \Im \frac{1}{\bar z}\overset{\eqref{{0rt0}b}}{=}	\frac{2r}{\pi (1-a)^2}\, \Im \frac{1}{\bar z}
	\end{equation}
для всех точек  $z\notin D\bigl(t_0, (a+1/a)T\bigr)$. Пусть 
\begin{equation}\label{nuiCD}
\nu_{\infty}:=\nu\bigm|_{\CC\setminus D\bigl(t_0, (a+1/a)T\bigr)}
\end{equation}
--- сужение заряда $\nu$ на внешность круга $D\bigl(t_0, (a+1/a)T\bigr)$, включающего в себя по предложению 
\ref{prxset}  меньший круг $D(T)$ из \eqref{DsD}.
Отсюда, интегрируя  обе части неравенства \eqref{afminAMg} по полной вариации $|\nu_{\infty}|\leq |\nu|$, сосредоточенной по определению \eqref{nuiCD} вне круга $D(T)$, имеем
\begin{equation}\label{nuinfty}
\bigl|(\nu_{\infty})^{\bal}\bigr|\bigl([t_1,t_2]\bigr)\overset{\eqref{afminAMg}}{\leq}
\frac{2r}{\pi (1-a)^2}\int\limits_{\CC\setminus D(T)}\Bigl|\Im \frac{1}{z}\Bigr|\dd |\nu|(z).
\end{equation}
Для остальной части $\nu_0\overset{\eqref{nuiCD}}{:=}\nu-\nu_{\infty}$ заряда $\nu$ с полной вариацией 
$|\nu_0|\leq |\nu|$ из оценки \eqref{es:upom} предложения \ref{pr:eshmb} и определения \ref{df:clbal} следует
\begin{multline}\label{nu0btt}
\bigl|(\nu_{0})^{\bal}\bigr|\bigl([t_1,t_2]\bigr)
\overset{ \eqref{df:bal0up}}{\leq} 
\int\limits_{D\bigl(t_0,(a+1/a)T\bigr)}\omega \bigl(z,[t_1,t_2]\bigr) \dd|\nu|(z)
\\
\overset{\eqref{es:upom}}{\leq} |\nu|\bigl(D(t_0,r)\bigr)+|\nu|\bigl(\{t_1,t_2\}\bigr)+
\frac{1}{2}|\nu|\bigl(\CC^{\up} \cap \partial {D}(t_0,r)\bigr)\\
+\int\limits_{D(t_0, (a+1/a)T)\setminus D(t_0,r)} \dfrac{1}{\pi} \arctg \dfrac{2r|z-t_0|}{|z-t_0|^2-r^2}\dd|\nu|(z)
\end{multline}
Последний интеграл здесь равен 
\begin{multline*}
\int\limits_{r}^{(a+1/a)T} \dfrac{1}{\pi} \arctg \dfrac{2rt}{t^2-r^2}\dd |\nu|(t_0,t)=\Bigl|\text{интегрирование по частям}\Bigr|
\\=\frac{1}{\pi}\Bigl(\arctg \frac{2r(a+1/a)T}{(a+1/a)^2T^2-r^2}\Bigr)|\nu|\bigl(t_0,(a+1/a)T\bigr)
-\frac{1}{2}|\nu|(t_0,r)+\frac{2r}{\pi}\int\limits_{r}^{(a+1/a)T}
\frac{|\nu|(t_0,t)}{t^2+r^2}\dd t\\
\leq
\frac{r(a+1/a)}{T}|\nu|\bigl(t_0,(a+1/a)T\bigr)
-\frac{1}{2}|\nu|\bigl(\CC^{\up} \cap \partial {D}(t_0,r)\bigr)+
\frac{2r}{\pi}\int\limits_{r}^{(a+1/a)T} \frac{|\nu|(t_0,t)}{t^2+r^2}\dd t
\end{multline*}
Отсюда, продолжая оценку \eqref{nu0btt} и учитывая \eqref{nuinfty}, ввиду  равенства $\nu=\nu_0+\nu_{\infty}$
и неравенства $|\nu|\leq |\nu_0|+|\nu_{\infty}|$  получаем требуемую в \eqref{nu:estT} оценку.
\end{proof}
\begin{corollary}\label{th:b0+} Пусть $\nu \in \mathcal{M}^+(\CC)$. Следующие три утверждения попарно эквивалентны:
\begin{enumerate}[{\rm (b1)}]
\item\label{b1} существует выметание  $\nu^{\bal}\in \mathcal{M}^+(\CC)$ 	из $\CC^{\up}$,
\item\label{b2} для некоторого подмножества $B\in  \mathcal B_{\rm b} (\RR)$ с $\lambda_{\RR}(B)>0$
\begin{equation}\label{e:lot}
\int_{\CC^{\up}} \omega (z, B )\dd \nu(z)<+\infty ,
\end{equation}
\item\label{b3} мера  $\nu$  удовлетворяет условию Бляшке \eqref{ineq:akh} около $\infty$ для $\CC^{\up}$.
\end{enumerate}
\end{corollary}
\begin{proof} Импликация (b\ref{b3})$\Rightarrow$(b\ref{b1}) --- частный случай теоремы  \ref{th:cup}.
Импликация (b\ref{b1})$\Rightarrow$(b\ref{b2})  --- очевидное следствие равенства \eqref{df:bal0up} определения \ref{df:clbal}. Из  (b\ref{b2}) следует существование чисел $-\infty<t_1<t_2<+\infty$, 
для которых  
\begin{equation}\label{e:loB}
\int_{\CC^{\up}} \omega\bigl(z, (t_1,t_2)\bigr)\dd \nu(z)\overset{\eqref{e:lot}}{<}+\infty
\end{equation}
Положим $a=1/2$ и $T:=\max\bigl\{|t_1|,|t_2|\bigr\}$. Тогда из  оценки снизу \eqref{afminAM-}
 предложения \ref{pr:gmA+} следует
	\begin{equation*}
	(t_2-t_1) \frac{1-1/2}{8\pi}\int_{\CC^{\up}\setminus D(2T)}
\Im \frac{1}{\bar z}\dd \nu(z)\leq \int_{\CC^{\up}} \omega\bigl(z, (t_1,t_2)\bigr)\dd \nu(z)\overset{\eqref{e:loB}}{<}+\infty,
	\end{equation*}
т.\,е. выполнено \eqref{ineq:akh} при $r_0=2T$, и из  (b\ref{b2}) выведено  (b\ref{b3}).
\end{proof}
\begin{remark}
Условие $\lambda_{\RR}(B)>0$ на $B\in  \mathcal B_{\rm b} (\RR)$ можно ослабить. Достаточно предполагать, что (b\ref{b2})  с \eqref{e:lot} выполнено  для некоторого множества $B$ ненулевой логарифмической емкости, т.\,е. неполярного \cite{Rans}.  
\end{remark}
\section{Выметание  заряда на систему лучей}\label{bal_S_0}  
\setcounter{equation}{0}
Значительная часть содержания  этого раздела~\ref{bal_S_0} в некоторой степени развивает и обобщает результаты из \cite{Kha91} и \cite[гл.~I]{KhDD92}.

В разделе~\ref{bal_S_0} всюду  $S$ --- множество лучей на $\CC$ с началом в нуле; $S_*:=S\setminus \{0\}$.  Далее такое множество лучей  $S$ называем {\it системой лучей. \/}
\subsection{ Система лучей}\label{Sray}
    Систему лучей $S$ рассматриваем одновременно и как {\it точечное подмножество в\/} $\CC$, т.\,е. как множество всех точек на лучах, образующих систему $S$.  В частности,  система лучей $S$  {\it замкнутая,\/}
если $S$ как точечное подмножество  замкнуто в $\CC$. Далее всюду, часто не оговаривая специально, {\it рассматриваем только непустые замкнутые системы лучей.\/} Связные компоненты дополнения $\CC\setminus S$ замкнутой системы лучей $S$ называем {\it углами, дополнительными к $S$.\/} И наоборот, точечное множество, образующее конус в $\CC$ над $\RR^+$, рассматриваем и как систему лучей. Так, $S=\RR=\RR^+\cup (-\RR^+)$ --- замкнутая система из пары лучей: 
положительной и отрицательной вещественных полуосей в $\CC$.

Для произвольной точки $z\in \CC_*$, в отличие от договоренности перед предложением \ref{pr:gmAa}, значения ее аргументов $\Arg z\in\RR$   однозначны лишь с точностью до слагаемого, кратного $2\pi$. При этом  $\Arg 0:=\RR$. Произвольному подмножеству $s\subset \RR$ сопоставляем   {\it систему лучей}
\begin{equation}\label{angs}
\angle\, s:=\{z\in \CC \colon s\cap  \Arg z\neq \varnothing\}.
\end{equation}
В таком контексте само подмножество $s\subset \RR$ называем {\it множеством направлений.\/}  И наоборот,  системе лучей $S\subset \CC$ соответствует $2\pi$-периодическое множество $s=\{\theta \in \RR \colon e^{i\theta}\in S\}$  ---  множество направлений, порождающее $S=\angle\, s$. 
В частности, для  интервала $I\subset \RR$  множество $\angle \, I\subset \CC$ --- угол с вершиной  в нуле. Для пары  чисел $\alpha, \beta\in \RR$, в соответствии с \eqref{angs}, 
\begin{subequations}\label{ang}
	\begin{align}
\angle\, [\alpha,\beta]&:=\bigl\{z\in \CC\colon [\alpha, \beta] \cap  \Arg z\neq \varnothing\bigr\} ,
\tag{\ref{ang}a}\label{ang:a}
\\
\angle_* (\alpha,\beta)&{:=}\{0\neq z\in \CC\colon (\alpha, \beta) \cap  \Arg z\neq \varnothing\}
\tag{\ref{ang}b}\label{ang:b}
	\end{align}
\end{subequations}
--- соответственно {\it замкнутый угол\/} (с вершиной в нуле при\/ $\alpha \leq \beta$)  и {\it открытый угол\/} --- оба {\it раствора\/} $\beta-\alpha\in \RR^+$ при $\alpha< \beta\leq \alpha +2\pi$.  Последнее естественно предполагать в записи углов $\angle_* (\alpha,\beta)$, дополнительных к системе лучей $S$.

\subsection{ Гармоническая мера для дополнения системы лучей}\label{df:harmm} 

{\it Редукцией угла\/} $\angle_*(\alpha, \beta)$ ненулевого раствора  {\it к верхней полуплоскости\/} называем конформную замену переменной $z':=(ze^{-i\alpha})^{\frac{\pi}{\beta-\alpha}}$, где $z\in \angle\,[\alpha, \beta]$ 
 и рассматривается ветвь функции $z\mapsto z^{\frac{\pi}{\beta-\alpha}}$, положительная на положительной полуоси $\RR^+$. Пусть $z\in \angle_*(\alpha, \beta)$ и $B'$ --- образ пересечения $B\cap \partial \angle\,[\alpha, \beta]\in \mathcal B(\CC)$  
 при редукции угла $\angle_*(\alpha, \beta)$ к верхней полуплоскости. Тогда, в силу конформной инвариантности гармонической меры, для угла $\angle_*(\alpha, \beta)$ она задается как
 \begin{equation}\label{ioang}
 \omega\bigl(z,B;\angle_*(\alpha, \beta)\bigr)\overset{\eqref{df:o}}{=}\omega(z',B')\overset{\eqref{seom:a}}{=}
 \int_{B'} \frac{1}{\pi}\Im \frac{1}{t-z} \dd \lambda_{\RR}(t).  
 \end{equation}
 
 \begin{propos}\label{aalb} Пусть $a\in (0,1)$. При условии $a|z|\geq r$  
	\begin{equation}\label{aalbaa}
	\omega\bigl(z,\overline{D}(r);\angle_*(\alpha, \beta)\bigr)\leq 
	\frac{2r^{\frac{\pi}{\beta-\alpha}}}{\pi (1-a^{\frac{\pi}{\beta-\alpha}})^2}
	\left(-\Im \frac{1}{(ze^{-i\alpha})^{\frac{\pi}{\beta-\alpha}}}\right),
	\end{equation}
а при условии $ar\geq |z|$ имеет место оценка 
\begin{equation}\label{aalbaad}
\omega\bigl(z,\CC\setminus D(r);\angle_*(\alpha, \beta)\bigr)\leq \frac{2r^{-\frac{\pi}{\beta-\alpha}}}{\pi \bigl(1-a^{\frac{\pi}{\beta-\alpha}}\bigr)^2} \Im\, (ze^{-i\alpha})^{\frac{\pi}{\beta-\alpha}}.
\end{equation}
\end{propos}
\begin{proof} Из  \eqref{ioang} для круга $B=\overline{D}(r)$ имеем 
	\begin{equation*}
	\omega\bigl(z,\overline{D}(r);\angle_*(\alpha, \beta)\bigr)=
	\omega\bigl((ze^{-i\alpha})^{\frac{\pi}{\beta-\alpha}}, [-r^{\frac{\pi}{\beta-\alpha}},r^{\frac{\pi}{\beta-\alpha}}] \bigr),
	\end{equation*}
	Отсюда по предложению \ref{pr:gmA} при $r:=t_2=-t_1>0$ и условии $a|z|\geq r$, соответствующем 
	условию \eqref{minM}, из оценки \eqref{afminAM} после редукции угла $\angle_*(\alpha, \beta)$ к верхней полуплоскости сразу следует \eqref{aalbaa}. Инверсия $\star \colon z\mapsto z^{\star}:= 1/\bar{z}$ по всем  $z\in \angle\,[\alpha, \beta]$ относительно единичной окружности сохраняет, как конформное отображение,  рассматриваемую гармоническую меру и дает оценку \eqref{aalbaad}.
\end{proof}

\begin{definition}[{\cite[определение 1.2]{Kha91}, \cite[определение 1.1.1]{KhDD92}}]\label{seom:aS} Пусть $S$ --- замкнутая система лучей. {\it Гармонической мерой множества\/ $B\in \mathcal B(\CC)$ 
 		в точке\/ $z\in \CC$ для множества\/}-дополнения $\CC\setminus S$ называем функцию
 	\begin{equation}\label{seom:aSf}
 	(z,B)\mapsto \omega (z,B; \CC\setminus S)\in [0,1], \quad z\in \CC, \quad B\in  \mathcal B(\CC),
 	\end{equation}
 	равную гармонической мере $\omega\bigl(z,B;\angle_*(\alpha, \beta)\bigr)$	в точках $z$, попавших в дополнительные к $S$ углы $\angle_*(\alpha, \beta)$, и равную мере Дирака  $\delta_z(B)$ от $B$ при $z\in S$. 
 \end{definition}
 Определение \ref{seom:aS} и обозначение \eqref{seom:aSf} согласованы с определением гармонической меры для
 $\CC^{\up}$ из подраздела ~\ref{p_3_1} и обозначением \eqref{seom:a}, где замкнутая система лучей $S$ образована всеми лучами с вершиной в нуле из $\overline{\CC}_{\lw}$ и $\CC\setminus S=\CC\setminus \overline{\CC}_{\lw}=\CC^{\up}$.

 \subsection{ Выметание заряда на систему лучей и условие Бляшке} 
Редукция угла $\angle_*(\alpha, \beta)$  к    $\CC^{\up}$ из подраздела \ref{df:harmm}   позволяет  обобщить  определение \ref{df:clbal} как 
\begin{definition}\label{df:clbala} Пусть $<-\infty<\alpha <\beta \leq\alpha+ 2\pi<+\infty$
и  $\nu\in \mathcal M(\CC)$. Заряд $\nu^{\bal}_{\CC\setminus \angle_*(\alpha, \beta)}\in \mathcal M(\CC)$, определяемый  на каждом множестве  $B\in  \mathcal B(\CC)$ равенством 
	\begin{equation}\label{df:bal0upa}
	\nu^{\bal}_{\CC\setminus \angle_*(\alpha, \beta)}(B)\overset{\eqref{ioang}}{:=}\int_{\angle_*(\alpha, \beta)} 
 \omega\bigl(z,B;\angle_*(\alpha, \beta)\bigr)\dd \nu(z)+
	\nu\bigl(B\cap {(\CC\setminus \angle_*(\alpha, \beta))}\bigr),
	\end{equation} 
	называем {\it выметанием заряда\/ $\nu$
	из угла\/} $\angle_*(\alpha, \beta)$ на дополнение 
 $\CC\setminus \angle_*(\alpha, \beta)$.
\end{definition}

\begin{definition}[{\cite[определение 1.3]{Kha91}, \cite[определение 1.1.2]{KhDD92}}]\label{df:clbalS} Пусть $\nu\in \mathcal M(\CC)$, $S$ --- замкнутая система лучей на $\CC$. 
	Заряд $\nu_S^{\bal}\in \mathcal M(\CC)$, определяемый  на каждом борелевском множестве $B\in  \mathcal B(\CC)$ равенством 
\begin{equation}\label{df:bal0upS}
\nu^{\bal}_S(B):=\int_{\CC} \omega(z, B; \CC\setminus S)\dd \nu(z),
\end{equation} 
называем {\it выметанием заряда\/ $\nu$ на\/} $S$, или {\it   из\/} $\CC\setminus S$, где нижний индекс $S$ будем опускать, если по (кон)тексту ясно, какая  система лучей $S$ рассматривается.
\end{definition}
 Определения \ref{df:clbal}, \ref{df:clbala} и \ref{df:clbalS} согласованы. Так,  выметание из верхней полуплоскости $\CC^{\up}$ на замкнутую нижнюю полуплоскость $\overline{\CC}_{\lw}$ --- это выметание на замкнутую систему лучей $S$, образованную всеми лучами с вершиной в нуле из $\overline{\CC}_{\lw}$.
Поскольку множество дополнительных к $S$ углов не более чем счетно, 
из определений \ref{df:clbal}--\ref{df:clbalS} сразу следуют свойства
\begin{enumerate}[{\rm (s1)}]
\item\label{s1}   {\it если для зарядов $\nu_1,\nu_2\in \mathcal M(\CC)$ 	существуют выметания $\nu_1^{\bal}, \nu_2^{\bal}\in \mathcal M(\CC)$ на одну и ту же замкнутую систему лучей, то для любых $c_1,c_2\in \RR$ существует  выметание $(c_1\nu_1+c_2\nu_2)^{\bal}=c_1\nu_1^{\bal}+c_2\nu_2^{\bal}$
на ту же систему лучей}  (линейность);
\item\label{s2} {\it для заряда $\nu\in \mathcal M(\CC)$ существание выметания $\nu^{\bal}_{\CC\setminus \angle_*(\alpha,\beta)}\overset{\eqref{df:bal0upa}}{\in} \mathcal M(\CC)$ из каждого угла $\angle_*(\alpha,\beta)$, дополнительного к $S$, эквивалентно существованию   выметания $\nu^{\bal}_S\in \mathcal M (\CC)$, получаемого  применением не более чем счетного числа  выметаний из всех дополнительных  к $S$ углов в смысле определения\/} \ref{df:clbala};
\item\label{s3} {\it выметание $\nu^{\bal}_S$ заряда $\nu \in \mathcal M^+(\CC)$ на систему лучей $S$ при его существовании  --- заряд из  $\mathcal M^+(\CC )$ с носителем $\supp \nu_S^{\bal}\subset 
S$;}
\item\label{s4} {\it выметание $\nu^{\bal}_S$ меры $\nu \in \mathcal M^+(\CC)$ на систему лучей $S$ при его существовании  --- положительная мера из  $\mathcal M^+(\CC )$.} 
\end{enumerate}

Существование выметания   $\nu^{\bal}\in \mathcal M(\CC)$ эквивалентна конечности  \eqref{df:bal0upS} для любого $B\in \mathcal B_{\rm b}(\CC)$. Для существования выметания $\nu^{\bal}\in \mathcal M(\CC)$ достаточно существования выметания $|\nu|^{\bal}\in \mathcal M^+(\CC)$ полной вариации $|\nu|\in \mathcal M^+(\CC)$. По построению для вариаций заряда и его выметания имеют место неравенства
\begin{equation}\label{pv+-}
|\nu^{\bal}|\leq |\nu|^{\bal}, \quad (\nu^{\bal})^{\pm}\leq (\nu^{\pm})^{\bal}\,.
\end{equation}
\begin{example} Неравенства в \eqref{pv+-} могут быть строгими. Пусть $S=\RR$. Рассмотрим заряд $\nu:=\delta_i-\delta_{-i}$ --- разность мер Дирака в точках $i\in \CC^{\up}$ и $-i\in \CC_{\lw}$. Тогда  по определению \ref{df:clbalS} получаем
$\nu_{\RR}^{\bal}=0=|\nu_{\RR}^{\bal}|=(\nu_{\RR}^{\bal})^{\pm}$,
в то время как в обозначении \eqref{nuR} для плотностей  функций  распределения на $\RR$ имеем
 \begin{equation*}
\dd\,\bigl(|\nu|^{\bal}_{\RR}\bigr)^{\RR}(t)=\frac{2}{\pi (1+t^2)}\dd t \, , \quad 
\dd\,\bigl((\nu^{\pm}_{\RR})^{\bal}\bigr)^{\RR}(t)=\frac{1}{\pi (1+t^2)}\dd t ,
\end{equation*}
откуда  $|\nu|^{\bal}_{\RR}(\RR)=2>0=|\nu_{\RR}^{\bal}|(\RR)$ и   $(\nu^{\pm})^{\bal}_{\RR}(\RR)=1>0=(\nu_{\RR}^{\bal})^{\pm}(\RR)$.
\end{example}

Редукция угла к верхней полуплоскости позволяет переформулировать определение \ref{c:akh} для угла как
\begin{definition}\label{c:akhan}
Пусть $<-\infty<\alpha <\beta \leq\alpha+ 2\pi<+\infty$. 	Заряд $\nu\in  \mathcal M(\CC)$  удовлетворяет {\it условию Бляшке около бесконечности для угла  $\angle_*(\alpha,\beta)$,\/}  если 
\begin{equation}\label{ineq:akhan}
\int\limits_{\angle_*(\alpha,\beta)\setminus D(r_0)}-\Im \frac{1}{(ze^{-i\alpha})^{\frac{\pi}{\beta-\alpha}}}\dd |\nu|(z)<+\infty\quad\text{для некоторого $r_0\in \RR^+_*$.}
\end{equation}
где подынтегральная функция в  \eqref{ineq:akh} удовлетворяет условиям
\begin{equation}\label{co:imzan} 
0\leq  -\Im \frac{1}{(ze^{-i\alpha})^{\frac{\pi}{\beta-\alpha}}}\leq \frac{1}{\;|z|^{\frac{\pi}{\beta-\alpha}}}\quad\text{для всех  $z\in \CC^{\up}_*$.}
\end{equation}
Заряд $\nu\in  \mathcal M(\CC)$  удовлетворяет {\it условию Бляшке около бесконечности вне системы  
лучей $S$,\/}  если заряд $\nu$ удовлетворяет условию условию Бляшке около бесконечности для любого 
угла, дополнительного к $S$.
\end{definition}

\begin{thm}\label{th:Sb} Если для заряда  $\nu\in \mathcal M (\CC)$ выполнено условие   Бляшке вне системы лучей $S$, то существует выметание $\nu_S^{\bal}\in \mathcal M (\CC)$. 

Для меры $\nu\in \mathcal M^+(\CC)$ выполнение условия Бляшке вне системы лучей $S$ эквивалентно существованию выметания $\nu_S^{\bal}\in \mathcal M^+(\CC)$.
\end{thm}
\begin{proof} По свойству (s\ref{s2})  импликация  для заряда $\nu\in \mathcal M (\CC)$ --- следствие теоремы \ref{th:cup}, а эквивалентность для меры $\nu\in \mathcal M(\CC)$ вытекает из эквивалентности 
(b\ref{b3})$\Leftrightarrow$(b\ref{b1}) следствия  \ref{th:b0+}.
\end{proof}

Оценку роста выметаемого  заряда или меры дает следующая
\begin{thm}\label{th:0nub} 
	Пусть $\nu\in \mathcal{M}(\CC)$, $S$ --- замкнутая система лучей в $\CC$,  $R\subset \RR_*^+$ --- неограниченное подмножество в $\RR^+$ и существует функция 	
\begin{subequations}\label{rgrB}
	\begin{align}
	g&\colon R\to \RR_*^+,\quad\text{для которой}\quad		\sup_{r\in R} \frac{r}{g(r)} :=a<1, 
	\tag{\ref{rgrB}a}\label{{rgrB}a}\\
C^+_S(r, g;\nu)&:=	\sum_{\angle_*(\alpha,\beta)} r^{\frac{\pi}{\beta-\alpha}}\int\limits_{\angle_*(\alpha,\beta)\setminus D(g(r))}-\Im \frac{1}{(ze^{-i\alpha})^{\frac{\pi}{\beta-\alpha}}}\dd |\nu|(z)<+\infty
	\tag{\ref{rgrB}b}\label{{rgrB}b}
\end{align}
	\end{subequations}
для всех $r\in R$,	где сумма в правой части берется по всем дополнительным к $S$ углам $\angle_*(\alpha,\beta)$. Тогда существует выметание \eqref{df:bal0upS} с оценками 
	\begin{equation}\label{nubrB}
	|\nu^{\bal}|^{\rad} (r)\leq |\nu|\bigl(g(r)\bigr)+\frac{2}{\pi \bigl(1-\sqrt{a}\,\bigr)^2}C^+_S(r,g;\nu)
\quad\text{для всех $r\in R$}.
	\end{equation}   
\end{thm}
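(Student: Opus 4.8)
The plan has two parts: first to secure existence of the balayage $\nu_S^{\bal}$, then to prove the estimate \eqref{nubrB} for each fixed $r\in R$ by splitting the mass of $|\nu|$ at the radius $g(r)$. For existence, I would note that finiteness of the sum $C^+_S(r,g;\nu)$ in \eqref{{rgrB}b} for even one $r\in R$ forces each of its nonnegative summands to be finite; since $r^{\pi/(\beta-\alpha)}>0$, this yields
\begin{equation*}
\int\limits_{\angle_*(\alpha,\beta)\setminus D(g(r))}-\Im \frac{1}{(ze^{-i\alpha})^{\frac{\pi}{\beta-\alpha}}}\dd |\nu|(z)<+\infty
\end{equation*}
for every component $\angle_*(\alpha,\beta)$ of $\CC\setminus S$. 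This is precisely condition \eqref{ineq:akhan} with $r_0=g(r)$, so $\nu$ has finite characteristic near $+\infty$ for $S$ in the sense of Definition \ref{c:akhan}, and Theorem \ref{th:Sb} then guarantees $\nu_S^{\bal}\in \mathcal M(\CC)$.

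For the estimate I would pass to the nonnegative measure $|\nu|$ through the first inequality in \eqref{pv+-} and expand by Definition \ref{df:clbalS}:
\begin{equation*}
|\nu^{\bal}|^{\rad}(r)=|\nu^{\bal}|\bigl(\overline{D}(r)\bigr)\leq |\nu|^{\bal}\bigl(\overline{D}(r)\bigr)=\int_{\CC}\omega\bigl(z,\overline{D}(r);\CC\setminus S\bigr)\dd |\nu|(z).
\end{equation*}
I then split this integral at $|z|=g(r)$. On the inner region $D(g(r))$ I bound the harmonic measure by $1$ using \eqref{seom:aSf}, so the inner part contributes at most $|\nu|\bigl(\overline{D}(g(r))\bigr)=|\nu|(g(r))$, which is the first term of \eqref{nubrB}. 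On the outer region $|z|\geq g(r)$, the hypothesis \eqref{{rgrB}a} forces $g(r)\geq r/a>r$, so every point of $S$ there lies outside $\overline{D}(r)$ and contributes nothing (for $z\in S$ one has $\omega(z,\overline{D}(r);\CC\setminus S)=\delta_z(\overline{D}(r))=0$); hence only the components survive, with $\omega(z,\overline{D}(r);\CC\setminus S)=\omega\bigl(z,\overline{D}(r);\angle_*(\alpha,\beta)\bigr)$ on each.

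The heart of the argument is the outer region. Since $r/g(r)\leq a$, every $z$ with $|z|\geq g(r)$ satisfies $a|z|\geq r$, so Proposition \ref{aalb}, inequality \eqref{aalbaa}, applies on each component and gives
\begin{equation*}
\omega\bigl(z,\overline{D}(r);\angle_*(\alpha,\beta)\bigr)\leq \frac{2\,r^{\frac{\pi}{\beta-\alpha}}}{\pi\bigl(1-a^{\frac{\pi}{\beta-\alpha}}\bigr)^2}\left(-\Im \frac{1}{(ze^{-i\alpha})^{\frac{\pi}{\beta-\alpha}}}\right).
\end{equation*}
Before summing over the components I must make this constant uniform, and this is the one genuinely delicate point: from $0<\beta-\alpha\leq 2\pi$ I get $\pi/(\beta-\alpha)\geq 1/2$, whence $a^{\pi/(\beta-\alpha)}\leq a^{1/2}=\sqrt{a}$ (as $0<a<1$) and therefore $\bigl(1-a^{\frac{\pi}{\beta-\alpha}}\bigr)^2\geq (1-\sqrt{a})^2$. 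Replacing the per-sector constant by the uniform $2/\bigl(\pi(1-\sqrt{a})^2\bigr)$, integrating over $\angle_*(\alpha,\beta)\setminus D(g(r))$ and summing over all components reproduces exactly $\frac{2}{\pi(1-\sqrt{a})^2}\,C^+_S(r,g;\nu)$ by \eqref{{rgrB}b}. Adding the inner and outer contributions then gives \eqref{nubrB}. I expect no real obstacle beyond this uniformization of the constant across sectors of differing opening $\beta-\alpha$, the remaining steps being direct applications of Proposition \ref{aalb} and of the definition of balayage.
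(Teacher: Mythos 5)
Your proof is correct and follows essentially the same route as the paper: the same splitting of $|\nu|$ at the radius $g(r)$, the trivial bound by $1$ on the inner part, Proposition \ref{aalb} with \eqref{aalbaa} on the outer part, and the same uniformization of the constant over sectors via $\pi/(\beta-\alpha)\geq 1/2$ (which the paper writes as a supremum over $0<\beta-\alpha\leq 2\pi$). The only cosmetic difference is that you route the reduction to $|\nu|$ through the inequality $|\nu^{\bal}|\leq|\nu|^{\bal}$ of \eqref{pv+-}, whereas the paper works directly with restrictions of $|\nu|$.
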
		
\begin{proof} Достаточно при произвольном $r\in R$ доказать \eqref{nubrB}. Пусть $\nu_{\infty}:=|\nu|\bigm|_{\CC\setminus D(g(r))}\in \mathcal M^+(\CC)$ --- сужение полной вариации $|\nu|$ на внешность круга  	$D(g(r))$, $\nu_r:=|\nu|-\nu_{\infty}\in \mathcal M^+(\CC)$ --- сужение полной вариации $|\nu|$ на  круг  $D(g(r))$. 
Из определяющих соотношений \eqref{df:bal0upS}  предложения \ref{df:clbalS} имеем 
\begin{equation}\label{df:bal0upSg}
(\nu_r)^{\bal}_S\bigl(D(r)\bigr)=\int\limits_{D(g(r))} \omega\bigl(z, D(r); \CC\setminus S\bigr)\dd \nu_r(z)\leq
1\cdot \nu_r^{\rad}(g(r)).
\end{equation}
В силу \eqref{{rgrB}a} при $z\in \supp \nu_{\infty}$ выполнено условие $a|z|\geq r$ 
предложения \ref{aalb} и из оценки \eqref{aalbaa}  в обозначении и при условии \eqref{{rgrB}b} имеем оценку
\begin{equation*}
(\nu_{\infty})^{\bal}_S\bigl(D(r)\bigr)\leq \left( \sup_{0<\beta-\alpha\leq 2\pi} \frac{2}{\pi \bigl(1-a^{\frac{\pi}{\beta-\alpha}}\bigr)^2}\right)\, C^+_S(r,g;\nu)=\frac{2}{\pi \bigl(1-\sqrt{a}\,\bigr)^2} C^+_S(r,g;\nu)
\end{equation*}
через  конечную величину $C^+_S(r,g;\nu)$. Последнее вместе с \eqref{df:bal0upSg}  дает \eqref{nubrB}.

\end{proof}	
Для положительных мер конечного порядка имеет место завершенная
	\begin{thm}\label{th:3p} Пусть $\nu\geq 0$  --- мера конечного типа при порядке $p\in \RR^+$, т.\,е.  $\type_p^{\infty}[\nu]
		\overset{\eqref{senu0:c}}{:=}\type_p^{\infty}\bigl[\nu^{\rad}\bigr]<+\infty$. 
		Эквивалентны  четыре утверждения:
	\begin{enumerate}[{\rm (i)}]	
\item\label{bi} $\int\limits_{\CC} \omega\bigl(z, D(r_0); \CC\setminus S\bigr)\dd \nu(z)\overset{\eqref{df:bal0upS}}{<}+\infty$
для некоторого числа $r_0>0$;
\item\label{bii} для меры $\nu$ в каждом из  дополнительных к $S$ углах $\angle_*(\alpha,\beta)$ раствора $\beta-\alpha\geq \pi/p$, число которых не больше $2p$,  выполнено  условие Бляшке 
\eqref{ineq:akhan} определения\/ {\rm \ref{c:akhan}} около бесконечности;
\item\label{biii} заряд $\nu\in  \mathcal M(\CC)$  удовлетворяет  условию Бляшке 
около бесконечности вне системы лучей $S$;
\item\label{biv} определено выметание $\nu_S^{\bal}\overset{\eqref{df:bal0upS}}{\in}\mathcal{M}(\CC)$ и  $\type_p^{\infty}[\nu_S^{\bal}]<+\infty$.
		\end{enumerate}
	\end{thm}
	\begin{proof} Эквивалентности \eqref{bi}\,$\Leftrightarrow$\,\eqref{bii}\,$\Leftrightarrow$\,\eqref{biv}\,  доказаны в 
\cite[теорема 1.1]{Kha91}, \cite[теорема 1.1.1]{KhDD92} на основе ослабленных версий 
\cite[лемма 1.1]{Kha91}, \cite[лемма 1.1.1]{KhDD92} оценок сверху и снизу гармонической меры из предложений  \ref{pr:gmA} и \ref{pr:gmA+}. Импликация   \eqref{bii}\,$\Rightarrow$\,\eqref{biv} может быть получена  и из теоремы \ref{th:0nub}. Импликация \eqref{biii}\,$\Rightarrow$\,\eqref{bii} очевидна. Рассуждения, доказывающие импликацию  \eqref{bii}\,$\Rightarrow$\,\eqref{biii}, также 
содержаться в \cite[доказательство теоремы 1.1]{Kha91}, \cite[доказательство теоремы 1.1.1]{KhDD92} в части, касающейся выметания меры $\mu_2$ из всех дополнительных к $S$ углов раствора $<\pi/p$.   
	\end{proof}
	
\begin{definition}[{\cite[определение 1.4]{Kha91}, \cite[определение 1.1.3]{KhDD92}}]\label{df:baldop}  Систему лучей $S$  {\it допустимая для меры\/} $\nu\in \mathcal{M}^+(\CC)$ с $\type_p^{\infty}[\nu]<+\infty$, если выполнено одно (любое) из четырех эквивалентных высказываний \eqref{bi}--\eqref{biv} теоремы \ref{th:3p}. 
Система лучей $S$ является {\it $p$-допустимой,\/}
	 если раствор любого дополнительного к $S$ угла менее  $\pi/p$, т.\,е.  $S$ допустимая  для всех
	 $\nu\in \mathcal{M}(\CC)$ с $\type_p^{\infty}[\nu]<+\infty$. 
	\end{definition}

\subsection{ Условие Линделёфа}\label{Lc} 
\begin{definition}[{\cite[следствие 2.1]{Kha91}, \cite[следствие 1.3.1]{KhDD92}}]
	Заряд $\nu\in \mathcal M(\CC)$ удовлетворяет {\it условию Линделёфа при порядке $p\in \NN$,} если 
для некоторого числа $r_0>0$ выполнено асимптотическое соотношение
	\begin{equation}\label{con:Lp}
	\Biggl|\;\int\limits_{D(r)\setminus D(r_0)} \frac{1}{z^p} \dd \nu(z) \Biggr|=O(1) \quad\text{при $r\to +\infty$.}
	\end{equation}
\end{definition}
\begin{thm}[{\cite[следствие 2.1, теорема 3.1]{Kha91}, \cite[следствие 1.3.1, теорема 1.4.1]{KhDD92}}]\label{th:Lc} Пусть  система лучей $S$ допустима для положительной меры $\nu\in \mathcal{M}^+(\CC)$ с $\type_p^{\infty}[\nu]<+\infty$ при некотором $p\in \NN$. Если мера   $\nu$  удовлетворяет условию Линделёфа  \eqref{con:Lp} при порядке $p$, то выметание $\nu_S^{\bal}$ меры $\nu$ на систему лучей $S$ также удовлетворяет условию Линделёфа  при порядке $p$.
\end{thm}	
\section{ Интеграл семейства мер по мере и выметание субгармонических функций}\label{imu} 
\setcounter{equation}{0}

Два различных взгляда на теорию меры и соответственно интегрирования (с одной стороны мера как функция множеств и интеграл Лебега, с другой --- мера как линейный функционал и интеграл Радона) излагаются почти всегда раздельно (ср. \cite{B} и \cite{DSh}; исключение в \cite{M}).
В вопросах, рассматриваемых в настоящей работе, эти подходы дают один и тот же результат. Однако ряд моментов теории интегрирование достаточно подробно рассмотрен в доступной литературе лишь для интегралов Радона. В целях автономности изложения, удобства ссылок и во избежание разночтений и несогласованности по толкованию  терминов  приведем  некоторые понятия и факты теории интегрирования в духе Н.~Бурбаки \cite{B}, адаптированной на случай  подмножеств в конечномерном пространстве.   

Пусть $k\in \NN$, $Z$  --- локально компактное подпространство  в $\RR^k$ с естественной  евклидовой  топологией; $C_0 (Z)$  ---  пространство непрерывных действительных функций на $Z$ с компактным носителем, или $\mathcal K (Z)$ в обозначениях из \cite{B}, а  $\mu$ --- положительная мера Радона на $Z$, т.\,е. положительный линейный функционал на $C_0(Z)$.
Функция $f\colon Z\to \RR_{\pm\infty}$ является $\mu$-интегрируемой на $Z$, если определен \cite[гл.~IV, \S~4]{B}
\begin{equation}
\text{\it интеграл}\quad \mu\bigl(|f|\bigr)=\int |f| \dd \mu\in \RR \,.
\end{equation}

Пусть $p\in \NN$,   $X\subset \RR^p$  --- локально компактное подпространство  в $\RR^p$, 
\begin{equation}\label{s:k}
T=\{\tau_z\colon z\in Z\}\quad \text{\it--- семейство положительных мер Радона на\/ $X$.} 
\end{equation}
Предположим, что для любой функции $f\in C_0(X)$ функция $z\mapsto \tau_{z}(f)$
является $\mu$-интегрируемой на $Z$. 
Положительный линейный функционал $\nu$ на $C_0(X)$, определенный по правилу \cite[гл.~V, \S~3]{B}, 
\cite[Введение, \S~1]{L}
\begin{equation}\label{semtmu}
\nu(f):=\int \tau_z(f) \dd \mu(z)\, ,
\end{equation} 
--- {\it интеграл семейства мер\/} $T$ из \eqref{s:k} {\it по мере\/ $\mu$}.  Обозначается это  как  
\begin{equation}\label{intmm}
\nu=\int\tau_{z}\dd \mu(z).
\end{equation}

В принятых в этом пп. \ref{imu}  обозначениях в несколько ослабленной, но достаточной для наших применений  форме справедливо далеко идущее развитие и обобщение теоремы  Фубини: 
\begin{Thopi}[{\cite[гл.~V, \S~3, п.~4, теорема 1]{B}}] 
		Пусть  семейство мер $T$ из \eqref{s:k}  удовлетворяет следующим условиям:
	\begin{enumerate}
		\item\label{1mu} Для любой функции $f\in C_0(X)$ функция $z\mapsto \tau_z(f)$ $\mu$-инт\-е\-г\-р\-и\-р\-уема.
		\item\label{2mu} Отображение  $z\mapsto \bigl\{\tau_z(f)\bigr\}_{f\in C_0(X)}$ пространства $Z$  в $\RR^{C_0(X)}$ измеримо.
	\end{enumerate}
Пусть 	$F\colon X\to \RR_{\pm\infty}$ --- $\nu$-интегрируемая функция и для $\mu$-почти всех  $z\in Z$ функция $F$  является $\tau_z$-интегрируемой, а для каждой функции $f\in  C_0(X)$ отображение $z\mapsto \tau_z(f)$ непрерывно. Тогда функция $z\mapsto \tau_z(F)$ является $\mu$-интегр\-и\-р\-у\-е\-м\-ой и 
			\begin{equation}
			\int_X F(x)\dd \nu (x)=\int_Z \left(\int_X F(x)\dd \tau_z(x)\right) \dd \mu (z).
			\end{equation}   
\end{Thopi}	
Здесь мы используем теорему о повторных интегралах в частном случае, когда  $k=p=2$,  $\RR^2$ отождествлено с $\CC$ и $Z=X=\CC$, а в роли семейства $T$ из \eqref{s:k} рассматривается семейство выметаний мер Дирака 
на систему лучей $S$.

\begin{thm}\label{th:IB} Пусть
	$S$ --- замкнутая система лучей  	и для  заряда $\nu\in \mathcal{M}(\CC)$ 
	 выполнено условие Бляшке вне системы лучей $S$, а  $\delta_z^{\bal}$ и $\nu^{\bal}$ --- выметания на $S$  соответственно меры Дирака\/ $\delta_z$ и заряда\/ $\nu$. Тогда
\begin{enumerate}[{\rm (i)}]
	\item\label{dei} для любого подмножества $B\in \mathcal{B}(\CC)$
	\begin{equation}\label{1balo}
	\int \mathbf{1}_B (z')\dd \delta_z^{\bal}(z')\overset{\eqref{chX}}{=}\omega(z,B;\CC\setminus S) \quad\text{при всех $z\in \CC$}, 
	\end{equation}
	т.\,е. $\delta_z^{\bal}=\omega(z,\cdot \,;\CC\setminus S)$ для  всех $z\in \CC$;
	\item\label{deii} для любой функции $f\in C_0(\CC)$ функция $z\mapsto \delta_z^{\bal}(f)$ непрерывна на $\CC$;
		\item\label{deiv} для любой $\nu^{\bal}$-интегрируемой функции $F\colon \CC\to \RR_{\pm\infty}$
\begin{subequations}\label{c:del}
	\begin{align}
\int F(z)\dd \nu^{\bal}(z)&=\int  \mathcal P_{\CC\setminus S}F(z)\dd \nu (z), 
			\tag{\ref{c:del}a}\label{{c:del}a}\\
\intertext{где подынтегральная функция в правой части \eqref{{c:del}a}}
\mathcal P_{\CC\setminus S}F(z)&\overset{\eqref{df:PDf}}{:=}\int F(z')\dd \delta_z^{\bal}(z')
\tag{\ref{c:del}b}\label{{c:del}b}					
	\end{align}
\end{subequations}
$\nu$-интегрируема и называется далее интегралом Пуассона функции $F$ на дополнении $\CC\setminus S$ системы лучей\/ $S$\/ {\rm (ср. с \eqref{df:PDf})};
	\item\label{deiii} выметание  $\nu^{\bal}=(\nu^+)^{\bal}-(\nu^-)^{\bal}$ --- разность выметаний на $S$  верхней и нижней вариаций $\nu$, равных интегралу семейства мер $\delta_z^{\bal}$ по $\nu^{\pm}$, т.\,е.
	\begin{equation}\label{eqomd}
	(\nu^{\pm})^{\bal}\overset{\eqref{intmm}}{=}\int \delta_z^{\bal} \dd \nu^{\pm}(z)=
	\int \omega(z,\cdot \,;\CC\setminus S) \dd  \nu^{\pm}(z).  
	\end{equation}
\end{enumerate}
\end{thm}
\begin{proof} \eqref{dei}.  Следует из очевидных равенств
	\begin{equation*}
\omega(z,B;\CC\setminus S)=\int \omega(z',B;\CC\setminus S) \dd \delta_z(z')
\\\overset{\eqref{df:bal0upS}}{=}\delta_z^{\bal}(B)=\int \mathbf{1}_B (z')\dd \delta_z^{\bal}(z').
	\end{equation*}
\eqref{deii}. Пусть $f\in C_0(\CC)$. Полагаем 
\begin{equation}\label{dfFF}
F(z):=\int f(z')\dd \delta_z^{\bal}(z')
\end{equation}	
Необходимо доказать непрерывность функции $F$. По определению гармонической меры и выметания функция $F$  из \eqref{dfFF} является гармоническим продолжением функции $f$ с границы каждого дополнительного к $S$ угла внутрь этого угла. Из известных классических фактов функция $F$ непрерывна в замыкании каждого дополнительного к $S$ угла и совпадает с $f$ на $S$. Остается показать, что для последовательности точек $(z_n)_{n\in \NN}$, стремящейся к точке $z_0\in S$ и попадающей при этом в разные, вообще говоря,  дополнительные к $S$ углы $\angle_*(\alpha_n,\beta_n)$,   по-прежнему   $\lim\limits_{n\to\infty}F(z_n)=f(z_0)$. Для произвольного числа  
$d>0$ выберем  достаточно малое число $r>0$ так, что 
\begin{equation}\label{drM}
\sup_{z\in D(z_0,2r)}\bigl|f(z)-f(z_0)\bigr|\leq d; \quad M:=\sup_{z\in \CC}\,\bigl|f(z)\bigr|.  
\end{equation}
Для некоторого номера $n_d\in \NN$ при $n\geq n_d$  все точки $z_n$ лежат в $D(z_0,r)$, а граница каждого угла $\angle_*(\alpha_n,\beta_n)$ пересекает границу круга $D(z_0,r)$.  Увеличивая, если это необходимо, номер $n_d$,
можно добиться того, что для некоторого числа $a\in (0,1)$, не зависящего от $n\geq n_d$, выполнены соотношения
\begin{equation}\label{azzn0}
a|z_n|\geq |z| \quad \text{при $z\in {\rm C}_n^0$}; \quad
|z_n|\leq a|z|  \quad \text{при $z\in {\rm C}_n^{\infty}$},
\end{equation}  
где ${\rm C}_n^0$ и ${\rm C}_n^{\infty}$ --- ограниченная и неограниченна связные компоненты множества
$\angle_*(\alpha_n,\beta_n)\setminus D(z_0,2r)$ соответственно. Заметим, что при $z_0=0$ множество  ${\rm C}_n^0$ пусто. Оценим разность 
\begin{equation}\label{dfFFf}
f(z_0)-F(z_n)\overset{\eqref{dfFF}}{=}\int \bigl(f(z_0)-f(z)\bigr)\dd \delta_{z_n}^{\bal}(z).
\end{equation}
Разбивая последний интеграл на сумму интегралов по трем множествам $D(z_0,r)$, ${\rm C}_n^0$ и ${\rm C}_n^{\infty}$, получаем неравенство типа теоремы о двух константах:
 \begin{multline}\label{lm_oma}
\bigl|f(z_0)-F(z_n)\bigr|\overset{\eqref{dfFFf}}{\leq}
\int_{D(z_0,r)} \bigl|f(z_0)-f(z)\bigr| \dd \delta_{z_n}^{\bal}(z)\\
+\left(\int_{{\rm C}^0_n}+\int_{{\rm C}^{\infty}_n}\right) \bigl(|f(z_0)|+|f(z)|\bigr) \dd \delta_{z_n}^{\bal}(z)\overset{\eqref{drM}}{\leq}
d+\int\limits_{{\rm C}^{0}_n\cup {\rm C}^{\infty}_n} 2M\dd \delta_{z_n}^{\bal}(z)\\
\overset{\eqref{1balo}}{=}d+2M \omega\bigl(z_n, {\rm C}^{0}_n\cup {\rm C}^{\infty}_n; \CC \setminus S\bigr)
=d+2M \omega\bigl(z_n, {\rm C}^{0}_n\cup {\rm C}^{\infty}_n; \angle_*(\alpha_n,\beta_n)\bigr)\\
\leq d+2M \omega\bigl(z_n, {\rm C}^{0}_n; \angle_*(\alpha_n,\beta_n)\bigr)+2M \omega\bigl(z_n,  {\rm C}^{\infty}_n; \angle_*(\alpha_n,\beta_n)\bigr),
 \end{multline}
 где последнее равенство следует из определения гармонической меры для дополнения системы лучей 
 из подпункта \eqref{df:harmm} ввиду $z_n\in \angle_*(\alpha_n,\beta_n)$. 
 Покажем, что оба последних слагаемых  в правой части  \eqref{lm_oma} стремятся к нулю при $z_n\to z_0$. 
 Для этого положим 
 \begin{equation}\label{tnozn}
 t_n^0:=\sup_{z\in {\rm C}^{0}_n}|z|, \quad 
 t_n^{\infty}:=\inf_{z\in {\rm C}^{\infty}_n}|z|.
 \end{equation}
  При этом  из соотношений \eqref{azzn0} следует 
 \begin{equation}\label{aznd}
 a|z_n|\geq t_n^0,\quad |z_n|\leq at_n^{\infty} \quad\text{при $n\geq n_d$}.
 \end{equation}
В обозначениях \eqref{tnozn} имеем ${\rm C}^{0}_n\subset \overline{D}( t_n^0)$ и 
${\rm C}^{\infty}_n\subset \CC\setminus D(t_0^{\infty})$, откуда
\begin{multline}\label{lm_omaD}
\bigl|f(z_0)-F(z_n)\bigr|\overset{\eqref{lm_oma}}{\leq}
d+2M \omega\bigl(z_n, {\rm C}^{0}_n; \angle_*(\alpha_n,\beta_n)\bigr)+2M \omega\bigl(z_n,  {\rm C}^{\infty}_n; \angle_*(\alpha_n,\beta_n)\bigr)\\
\leq  d+2M \omega\bigl(z_n, \overline{D}( t_n^0); \angle_*(\alpha_n,\beta_n)\bigr)
+2M \omega\bigl(z_n, \CC\setminus D(t_0^{\infty}); \angle_*(\alpha_n,\beta_n)\bigr)
\end{multline}
По предложению \ref{aalb} при условии $a|z|\geq r$, соответствующему  первому неравенству в \eqref{aznd} 
с $z=z_n$ и $r=t_n^0$,  из \eqref{aalbaa}  получаем 
\begin{multline}\label{Dt}
  \omega\bigl(z_n, \overline{D}( t_n^0); \angle_*(\alpha_n,\beta_n)\bigr)
  \leq \frac{2 (t_n^0)^{\pi/(\beta_n-\alpha_n)}}{\pi\bigl(1-a^{\pi/(\beta_n-\alpha_n)}\bigr)^2}\, 
  \left(-\Im \frac{1}{(z_n)^{\pi/(\beta_n-\alpha_n)}}\right)\\
  \leq \frac{2}{\pi \bigl(1-\sqrt{a}\,\bigr)^2}  \left(\frac{t_n^0}{|z_n|}\right)^{\pi/(\beta_n-\alpha_n)}
  \overset{\eqref{aznd}}{\leq}
  \frac{2}{\pi \bigl(1-\sqrt{a}\,\bigr)^2} \, a^{\pi/(\beta_n-\alpha_n)}
\underset{n\to \infty}{\longrightarrow} 0 \,,
\end{multline}
поскольку $(0,2\pi]\ni \beta_n-\alpha_n\to 0$ при $n\to \infty$ и $0<a<1$.
Аналогично, по предложению \ref{aalb} при условии $ar\geq |z|$, соответствующему  второму неравенству в \eqref{aznd} с $z=z_n$ и $r=t_n^{\infty}$,  из \eqref{aalbaad}  получаем 
\begin{multline}\label{Dti}
\omega\bigl(z_n, \CC\setminus D(t_n^{\infty}); \angle_*(\alpha_n,\beta_n)\bigr)
\leq \frac{2 (t_n^{\infty})^{-\pi/(\beta_n-\alpha_n)}}{\pi\bigl(1-a^{\pi/(\beta_n-\alpha_n)}\bigr)^2}\, 
\Im\, (z_n)^{\pi/(\beta_n-\alpha_n)}\\
\leq \frac{2}{\pi \bigl(1-\sqrt{a}\,\bigr)^2}  \left(\frac{|z_n|}{t_n^{\infty}}\right)^{\pi/(\beta_n-\alpha_n)}  \overset{\eqref{aznd}}{\leq} \frac{2}{\pi \bigl(1-\sqrt{a}\,\bigr)^2} \, a^{\pi/(\beta_n-\alpha_n)}
\underset{n\to \infty}{\longrightarrow} 0 \,.
\end{multline}
Таким образом, из соотношений  \eqref{Dt} и \eqref{Dti} согласно \eqref{lm_omaD} следует неравенство
$\lim\limits_{n\to \infty}\bigl|f(z_0)-F(z_n)\bigr|\leq d$.
Таким образом,   в силу прозвола в выборе числа $d>0$ и точки $z_0\in \CC$ получаем  $\lim\limits_{n\to \infty} F(z_n)=f(z_0)$ для всех $z_0\in \CC$.
Требования \ref{1mu}--\ref{2mu} теоремы о повторных  интегралах в случае, когда в роли мер $\tau_z$  выступают выметания $\delta_z^{\bal}$ мер Дирака достаточно очевидны, что вместе с \eqref{deii} по теореме о повторных интегралах  доказывает \eqref{c:del} из \eqref{deiv}.

Наконец, равенства \eqref{eqomd} п.~\eqref{deiii} сразу следуют из равенства \eqref{1balo}  п.~\eqref{dei} по определению  \ref{df:clbala} после интегрирования обеих частей равенства \eqref{1balo} по верхней и нижней вариациям $\nu^{\pm}$.
\end{proof}

\subsection{Выметание субгармонической функции на систему лучей}
\begin{definition}[{\cite[определение 2.1]{Kha91}, \cite[определение 0.1]{KhDD92}}]
Пусть $u\in \sbh(\CC)$. {\it Выметанием функции $u$ на систему лучей\/} $S$ называем 
функцию $u_S^{\bal}\in \sbh (\CC)$, гармоническую в $\CC \setminus S$ с сужением $u_S^{\bal}\bigm|_S=u\bigm|_S$, т.\,е.  $u_S^{\bal}(z)=u(z)$   для  $z\in S$. Выметание $u_S^{\bal}$ называем также {\it выметанием из дополнительных углов\/} $\CC\setminus S$. 
\end{definition} 
\begin{thm}\label{th:bs}
 Пусть для меры  $\nu\in \mathcal M^+ (\CC)$ выполнено условие   Бляшке вне системы лучей $S$, т.\,е. существует выметание $\nu_S^{\bal}\in \mathcal M^+ (\CC)$ {\rm (теорема \ref{th:Sb})}. 
Тогда для любой функции $u\in \sbh_*(\CC)$ с мерой Рисса $\nu$ существует выметание 
$u_S^{\bal}$ на $S$ с мерой Рисса $\nu_S^{\bal}$.
\end{thm}

Прежде чем  перейти к доказательству теоремы \ref{th:bs}, нам потребуются некоторые  сведения из \cite{Kh07} и \cite[4.1]{HK}.
\subsection{ Субгармонические ядра и глобальное представление Рисса}

Пусть  $B$ --- борелевское подмножество в области  $\Omega$ и $\nu \in \mc M^+(\Omega)$. По определению  $L^1(B,\dd \nu)$ --- множество всех функций $g\colon B \to [-\infty , +\infty]$, интегрируемых по мере $\nu\bigm|_B$, т.\,е. таких, что $\int_B|g| \dd \nu <+\infty$.

\begin{definition}[{\cite[определение 2]{Kh07}}]\label{df:subk} 
Пусть $B$ --- борелевское подмножество в $\Omega$, а борелевская функция $h \colon B\times \Omega \to  \RR$ 
локально ограничена и для каждой фиксированной точки $\zeta \in B$ функция 
$h (\zeta ,\cdot )\colon \Omega \to \RR$ {\it гармоническая на\/} $\Omega$. Тогда функцию
\begin{equation}\label{rep:subk}
k \colon (\zeta , z) \, \longmapsto \, 
\log |\zeta -z|+h (\zeta , z), \quad  (\zeta , z)\in B\times \Omega,	
\end{equation}
будем называть {\it субгармоническим ядром на\/} $B\times \Omega$
({\it  с гармонической компонентой\/ $h$ и несущим множеством\/ $B$}). 
\end{definition}

\begin{definition}[{\cite[определение 3]{Kh07}}]\label{df:subks} 
Пусть мера $\nu \in \mc M^+(\Omega)$
{\it сосредоточена на борелевском подмножестве\/} $B\subset \Omega$. Субгармоническое ядро $k$ на $B\times \Omega$ называем   {\it подходящим для\/} $\nu$,
если для каждой точки $z\in \Omega$ найдутся подобласть $D_z\Subset \Omega$, содержащая точку $z$, и функция 
$g_z \in L^1\bigl((\Omega \setminus D_z)\bigcap B, \dd \nu \bigr)$, для которых  
\begin{equation}\label{dg:nuk}
\sup_{w\in D_z}\bigl| k(\zeta , w )\bigr|\leq g_z(\zeta) \quad \text{\it при всех\/ \; $\zeta \in (\Omega \setminus D_z)\bigcap B$}.
\end{equation}
\end{definition}

Следующее предложение  представляет собой глобальную версию классической теоремы Рисса о локальном представлении субгармонической функции в виде суммы логарифмического потенциала ее меры Рисса и некоторой гармонической функции \cite[Теорема~3.9]{HK}, \cite[Теорема~3.7.9]{Rans}. 
\begin{propos}[{\cite[предложение 3.11]{Kh07}}]\label{pr:Riesz}
Пусть мера\/ $\nu \in \mc M^+(\Omega)$ сосредоточена на борелевском подмножестве\/ $B\subset \Omega$ и субгармоническое ядро\/ $k$ на\/ $B\times \Omega$ --- подходящее для меры\/ $\nu$. 
Тогда интеграл 
\begin{equation}\label{rep:glpot} 
U^{\nu}_k(z):=	\int_B k (\zeta , z) \dd \nu( \zeta ), \quad z\in \Omega,
\end{equation}
определяет субгармоническую на\/  $\Omega$ функцию с мерой Рисса\/ $\nu$,
а для каждой функции $M\in \sbh (\Omega)$ с мерой Рисса\/ $\nu_M=\nu$ имеет место представление
\begin{equation}\label{repr:Riesz}
M=	U_k^{\nu}+H \quad \text{на \; $\Omega$, \; где  $H\in \Har (\Omega)$}.
\end{equation}
\end{propos}

В случае произвольной субгармонической в $\Omega:= \CC$ функции $u\neq \boldsymbol {-\infty}$ с мерой Рисса $\nu:=\nu_u$ конструкция одного  из возможных вариантов подходящего для меры $\nu$ субгармонического ядра давно известно как ядро Вейерштрасса \cite[4.1]{HK}. Для согласованности с обозначениями из \cite{HK} введем его в той же форме, что и в  \cite[4.1]{HK} (в наших  записях переменная  $z$  соответствует переменной $x$, а $\zeta$  --- соответственно 
 $\xi$ или $\zeta$ в \cite[4.1]{HK} и, кроме того, переменные переставлены местами).       
Положим 
\begin{equation}\label{Ko}
K(z):=\log |z|, \quad K(\zeta-z)=\sum_{j=0}^\infty a_j(\zeta, z)=
\Re \Bigl\{\log |\zeta|-\sum_{j=1}^{\infty}\frac1{j}\Bigl(\frac{z}{\zeta}\Bigr)^j\Bigr\}
\end{equation}
--- разложение гармонической по $z$ в $\CC$, исключая точку $z=\zeta$, в степенной ряд по переменным $z,\bar z$, где при фиксированном $j$ и $\zeta\neq 0$ через  $a_j(\zeta, z)$, $j\in \NN$, обозначены одночлены от $z,\bar z$ степени $j$. При этом одночлены $a_j(\zeta, z)$, $j\in \NN$, гармоничны по $z$ при фиксированном $\zeta$ и непрерывны по совокупности переменных $z,\zeta$ при $|\zeta|\neq 0$
\cite[лемма 4.1]{HK}. Наряду с  \eqref{Ko} рассмотрим <<урезанные>> ядра \cite[(4.1.2)]{HK}
\begin{equation}\label{Kqz}
K_q(\zeta,z):= K(\zeta-z)-\sum_{j=0}^q a_j(\zeta, z), \quad q\in \NN_0.
\end{equation}
являющиеся субгармоническими ядрами. 

Предположим пока, что $\supp \nu \cap \DD$. Для  меры $\nu\in \sbh(\CC)$ всегда можно подобрать положительную возрастающую функцию $q\colon \RR_+\to
\{-1\}\cap \NN_0$, непрерывную  справа, тождественно равную $-1$ на отрезке $[0,1]$,  для которой 
\begin{equation}\label{conver}
\int_{0}^{\infty} \Bigl(\frac{t_0}{t}\Bigr)^{q(t)+1} \dd \nu^{\rad}(t)<+\infty.
\end{equation}
Рассмотрим новое ядро \cite[(4.1.10)]{HK}) 
\begin{equation}\label{Kqzz}
 K_{q(|\zeta|)}(\zeta, z), \;\text{(при $|\zeta|\leq 1$ это $\log|\zeta-z|$, поскольку $\sum_{j=0}^{-1}\dots:=0$ в 
\eqref{Kqz})}  
\end{equation}
--- субгармоническое ядро, которое, как легко следует из известных оценок  ядер \eqref{Kqz} \cite[4.1.1]{HK} и интергрирования по частям интеграла \eqref{conver},
подходящее для меры $\nu$. 
В этом случае глобальное представление Рисса \eqref{rep:glpot}  из предложения \ref{pr:Riesz} задается равенством
\begin{equation}\label{rep:glpotU} 
U^{\nu}_{K_{q(|\cdot|)}}(z):=	\int_\CC K_{q(|\zeta|)} (\zeta , z) \dd \nu( \zeta ), \quad z\in \CC,
\end{equation}
а аналог    равенства \eqref{repr:Riesz} задается представлением Вейерштрасса \cite[теорема 4.1]{HK}:
\begin{equation}\label{reprW}
u(z)=U^{\nu}_{K_{q(|\cdot|)}}(z) +H(z)=\int_{\CC} K_{q(|\zeta|)}(\zeta, z) \dd \nu_u(\zeta)+H(z), 
\end{equation}
где  $H\in \Har (\CC)$, а интеграл сходится абсолютно в окрестности точки $\infty$ и равномерно в любом ограниченном круге.

Теперь можем обратиться к доказательству теоремы \ref{th:bs}.
\begin{proof} 
Для достаточно малых чисел $\sigma\in (0,1)$ введем обозначение
\begin{equation*}
\log_{\sigma}t:=\max\{\log t, \log \sigma\}
\end{equation*}
и в присутствующие в ядре \eqref{Kqzz} логарифмические выражения
\begin{equation*}
\log |\zeta-z| \quad\text{и, возможно, появляющееся при $|\zeta|>1$,} \; \log \Bigl|1-\frac{z}{\zeta}\Bigl|
\end{equation*}
заменим на непрерывные
\begin{equation}
\log_{\sigma} |\zeta-z| \quad\text{и} \; \log_{\sigma} |\zeta-z|-\log |\zeta| \quad\text{уже при $|\zeta|>1$.}
\end{equation}
При такой замене новое ядро $K_{q(|\cdot|)}^{\sigma}(z)$ становится кусочно непрерывным и локально ограниченным, а при $\sigma$, стремящемся к нулю, убывая, стремится к \eqref{Kqzz}. Применяя теорему  
\ref{th:IB}\eqref{deiv} при $F=K_{q(|\cdot|)}^{\sigma}(z)$ и устремляя $\sigma$ к нулю, получим
равенство
 \begin{equation*}
\int K_{q(|\cdot|)}(\zeta_1,z) \dd \nu^{\bal}(\zeta)
=\int\Bigl( K_{q(|\cdot|)}(\xi,z) \dd \delta_{\zeta}^{\bal}(\xi)\Bigl )\dd \nu(\zeta),
\end{equation*}
где для всех $z\in S$ 
\begin{equation*}
K_{q(|\cdot|)}(\xi,z) \dd \delta_{\zeta}^{\bal}(\xi)=K_{q(|\cdot|)}(\zeta,z), \quad z\in S.
\end{equation*}
Таким образом, в качестве выметания субгармонической функции $U$ можем положить функцию
\begin{equation*}
U^{\bal}(z):=\int K_{q(|\cdot|)}(\zeta,z) \dd \nu^{\bal}(\zeta).
\end{equation*}
Добавка отброшенного ранее гармоническое слагаемое $h$ дает $u^{\bal}=U^{\bal}+h$, что и нужно.
\end{proof}

Приведенное здесь доказательство теоремы \ref{th:bs} технически упрощает доказательство  \cite[основная терема]{Kha91}, где был рассмотрен лишь случай функций конечного порядка и позволяет не выделять как особый незавершенный случай ситуации функций и мер порядка $\rho<1$. Теперь основной результат \cite[основная терема]{Kha91} можно сформулировать в законченной форме:
\begin{thm}\label{mthfo}
Пусть $p\in \RR^+_*$, $u\in \sbh_*(\CC)$ и $\type_p^{\infty}[u]<+\infty$, S --- допустимая система лучей для меры Рисса $\nu_u$, а $\nu_u^{\bal}$ --- выметание меры  $\nu$ на $S$. Тогда существует выметание $u^{\bal}\in \sbh_*(\CC)$ с  $\type_p^{\infty}[u^{\bal}]<+\infty$ и мерой Рисса $\nu^{\bal}$ с $\type_p^{\infty}[\nu^{\bal}]<+\infty$, удовлетворяющей при целом $p$ условию Линделефа 
{\rm (подраздел~\ref{Lc}, определение \ref{con:Lp}, теорема \ref{th:Lc})}.
\end{thm}

\section{Приложения классического выметания на систему лучей}\label{applA} 
\setcounter{equation}{0}

\subsection{ Полная регулярность роста субгармонической или целой функции на системе лучей}\label{vpreg}

 Определения полной регулярности роста целой и/или субгармонической функции и правильно распределенной последовательности нулей и/или меры можно найти в \cite{Levin56}, \cite{Az}).
Напомним их  здесь.

Пусть $p\in \RR_*^+$. Функция $u\in \sbh_*(\CC)$ c $\type_p^{+\infty}[u]<+\infty $ {\it вполне регулярного роста\/} (при порядке $p$) на луче $l_{\theta}:=\{re^{i\theta}\colon r\in \RR^+\}$, если существует предел
\begin{equation*}
\lim_{\substack{r\to +\infty\\r\notin E}} \frac{u(re^{i\theta})}{r^p}\, , \quad  \lim_{t\to \infty}\frac{\lambda_{\RR}\bigl(E\cap  [-t,t]\bigr)}{t}=0
\end{equation*}
 т.\,е. когда $r\to +\infty$, не принимая значений из некоторого множества $E$ нулевой относительной линейной меры. Целая функция $0\neq f\in \Hol(\CC)$ с $\type_p^{\infty}[\log|f|]<+\infty$ вполне регулярного роста (при порядке $p$) на луче $l_{\theta}$, если таковой является функция $u=\log|f|$. Функция вполне регулярного роста на системе лучей $S$ (при порядке $p$) если она конечного типа при порядке $p$ около бесконечности и вполне регулярного роста на каждом луче из $S$.

Мера $\nu\in \mc M^+(\CC)$ с $\type_p^{\infty}[\nu]<+\infty$ {\it правильно распределена на $\CC$} при нецелом порядке $p$, если при всех $\alpha<\beta\in \RR\setminus N$, где множество $N$ не более чем счетно, существует конечный предел {\it (угловая плотность)}
\begin{equation}\label{ugpl}
b_{\nu}(\alpha, \beta)=\lim_{r\to +\infty} \frac{\nu\bigl(\angle [\alpha, \beta] \bigr)}{r^p} .
\end{equation} 
При целом $p$ для правильной распределенности меры $\nu$ дополнительно требуется существование конечного предела 
\begin{equation}\label{upL}
\lim_{r\to +\infty}\int_{1\leq r\leq r} \frac{1}{z^p} \dd \nu(z) \quad\text{\it --- правильное условие Линделефа}.
\end{equation}

В \cite[теорема 4.1]{Kha91}, \cite[теорема 1.5.1]{KhDD92} дано основное приложение выметания 
на системе лучей в предположении, что в случае системы лучей $S$, состоящей из одного луча порядок $p<1$. По той же схеме, что и при доказательстве упомянутых в предыдущей фразе теорем, теорема \ref{mthfo} позволяет снять это ограничение. 
\begin{thm}[{\rm \cite[теорема 4.1]{Kha91}, \cite[теорема 1.5.1]{KhDD92}}]\label{crgr}
Пусть $p\in \RR^+_*$ и   $S$ --- система лучей, допустимая для меры Рисса $\nu_u$ функции $u\in \sbh_*(\CC)$ с 
$\type_p^{\infty}[u]<+\infty$. Функция $u$ вполне регулярного роста на $S$, если и только если выметание $\nu_u^{\bal}$ на $S$ --- правильно распределенная  мера на $\CC$. 
\end{thm}
 Приведем здесь конкретные реализации теоремы \ref{crgr} в виде двух примеров с явными соотношениями, приведенных только в диссертации \cite{KhDD92}, примеры 1.5.1, 1.5.2, в неполной и недостаточно корректной  форме.  
\begin{example}[{\rm \cite[теорема 1.5.2]{KhDD92}}]\label{exgr}  Пусть $f\in \Hol_*(\CC)$ --- целая функция конечного типа при порядке 1 и класса Ахиезера, или класс A, \cite{Levin56} с последовательностью нулей ${\tt Z}:=\Zero_f=\{{\tt z}_k\}_{k=1,2,\dots}$, что по определению означает выполнение условия Бляшке определения  \ref {c:akh} около бесконечности для  верхней и нижней полуплоскостей $\CC^{\up}$ и $\CC_{\lw}$ одновременно:
\begin{equation}\label{imz}
\sum_{|{\tt z}_k|\neq 0} \Bigl|\Im \frac{1}{{\tt z}_k}\Bigr|<+\infty,
\end{equation} 
для системы двух лучей $S=\RR=(-\RR^+)\cup \RR^+$,
или , в эквивалентной форме, условие Ахиезера 
\begin{equation}\label{Af}
\limsup_{r\to +\infty}\int_1^r\frac{\log \bigl|f(x)f(-x)\bigr|}{x^2} \dd x <+\infty
\end{equation}
Условие \eqref{imz} в точности означает, что вещественная ось допустима для считающей меры $n_{\tt Z}$
\eqref{df:nZS}. По определению выметания меры и геометрическому смыслу гармонической меры   
выметание $n_{\tt Z}^{\bal}$ на $\RR$ определяется по правилу
\begin{equation*}
n_{\tt Z}^{\bal}\bigl([t_1,t_2]\bigr)=\sum_k\omega \bigl({\tt z}_k, [t_1, t_2]\bigr),
\end{equation*}
где $\omega \bigl({\tt z}_k, [t_1, t_2]\bigr)$ --- угол, под которым виден отрезок $[t_1,t_2]$ из точки ${\tt z}_k$, деленный на $\pi$. Рассмотрим, как  в \eqref{nuR}, функцию распределения $(\nu_{\tt Z}^{\bal})^{\RR}$ выметенной меры 
$\nu_{\tt Z}^{\bal}$ с носителем на $\RR$. 
По теореме \ref{crgr} {\it целая функция экспоненциального типа $f$ класса A вполне регулярного роста на $\RR$ тогда и только тогда, когда существуют три  предела}
\begin{equation*}
\lim_{t\to \pm\infty}  \frac{(\nu_{\tt Z}^{\bal})^{\RR}(t)}{t} ,
\quad \lim_{t\to+\infty}\int_1^t\frac{1}{s}\bigl((\nu_{\tt Z}^{\bal})^{\RR}(s)+(\nu_{\tt Z}^{\bal})^{\RR}(-s)\bigr)
\dd s.
\end{equation*}
Здеcь существование двух первых пределов означает существованию плотностей при движении вправо и влево 
соответственно к $\pm \infty$ (ср. с \eqref{ugpl}), а существование последнего предела соответствует существованию предела в правильном условии Линделефа \eqref{upL}.
\end{example}

\begin{example}[{\rm \cite[теорема 1.5.1]{KhDD92}}]\label{exgr} Пусть $f$ --- целая функция экспоненциального типа с нулями на лучах 
\begin{equation}
l_k:=\Bigl\{ t\exp i\Bigl(\frac{\pi}{4}+\frac{\pi k}{2}\Bigr)\colon t\geq 0\Bigr\}, \quad k=0,1,2,3,
\end{equation}
т.\,е. на биссектрисах четвертей координатной плоскости, $n_k(t)\geq 0$ --- функции распределения \eqref{nuR}, или считающие радиальные функции, нулей, лежащих на лучах $l_k$, $t\geq 0$. Полагаем $n_4(t)\equiv n_0(t)$. 
Функция $f$ вполне регулярного роста при порядке $1$ одновременно на вещественной и мнимой осях тогда и только тогда, когда существуют конечные пределы
\begin{subequations}\label{densn}
\begin{align}
\lim_{t\to+\infty} 2\int_0^{+\infty}\frac{n_k(s)+n_{k+1}(s)}{s^4+t^2}& \, s \dd s=b_k, \quad k=0,1,2,3,
\tag{\ref{densn}d}\label{densnd}\\
\lim_{r\to+\infty}\int_1^r\int_0^{+\infty}\Bigl( \sum_{k=0}^3 i^{k+1}\bigl( n_k(s)&+n_{k+1}(s)\bigr)\Bigr)
\frac{s \dd s \dd t}{t(s^4+t^2)}.
\tag{\ref{densn}L}\label{densnL}
\end{align}
\end{subequations}
При этом индикатор $h_f$ функции $f$таков, что 
\begin{equation*}
h_f\Bigl(\frac{\pi}{2}\Bigr)+h_f\Bigl(-\frac{\pi}{2}\Bigr)=b_0+b_2,
\quad 
h_f(0)+h_f(\pi)=b_1+b_3.
\end{equation*} 
Существование пределов \eqref{densnd} соответствует существованию угловой плотности (ср. с 
\eqref{ugpl}), а существование предела  \eqref{densnL} --- правильному условию Линдедефа \eqref{upL}.

\end{example}
\subsection{ Выметание на $\RR$ целых функций класса A, мультипликаторы, полнота систем экспонент}\label{bAmp} В качестве системы лучей $S$ в этом подразделе рассматриваем вещественную ось $\RR$, дополнительные углы для которой --- верхняя и нижняя полуплоскости $\CC^{\up}$ и $\CC_{\lw}$. 
\begin{thm}\label{thmA} Пусть $u\in \sbh_*(\CC)$ с мерой Рисса $\nu_u$ конечного типа при порядке $1$ около $\infty$, т.\,е. $\type_p^{\infty}[u]<+\infty$, для меры $\nu_u$ выполнено условие Бляшке около $\infty$ одновременно для верхней и нижней полуплоскостей, т.\,е.
\begin{equation}\label{coIm}
\int_{\CC\setminus \DD}\Bigl|\Im \frac{1}{z}\Bigr|\dd \nu_u(z)<+\infty,
\end{equation}
что в терминах самой функции $u$ эквивалентно условию {\rm (см. \cite[лемма 3.2]{Kha91}, \cite[лемма 1.4.3]{KhDD92}, ср. с \eqref{Af})}
\begin{equation}\label{coImu}
\limsup_{r\to +\infty}\int_1^{r}\frac{u(t)+u(-t)}{t^2} \dd t<+\infty
\end{equation}
Тогда для любого числа $\e>0$ найдется целая функция экспоненциального типа $g$ с нулями только на вещественной оси, с которой для индикатора $h_{u+\log |g|}$ выполнена
оценка  
\begin{equation}\label{uwout}
h_{u+\log|g|}(0)+h_{u+\log |g|}(\pi)\leq \e,
\end{equation}
\end{thm}
\begin{proof} Будет использована 
\begin{lemma}[{\rm частный случай основной теоремы из \cite{Kha01l}}] Пусть число $\beta\in (0,\pi/2)$ и $\mu$ --- мера конечной верхней плотности
\begin{equation}\label{bnuf}
b_{\mu}\overset{\eqref{ugpl}}{:=}\limsup_{t\to +\infty}\frac{\mu^{\rad}(t)}{t}<+\infty,
\end{equation}
 носитель которой содержиться в паре вертикальных углов 
\begin{equation}\label{vertug}
\Bigl\{ z:\bigl|\arg z-\frac{\pi}2\bigr|\le \beta \Bigr\}
\cup \Bigl\{ z:\bigl|\arg z+\frac{\pi}2\bigr|\le \beta \Bigr\},
\end{equation}
Тогда для любой субгармонической функции $v_{\mu}$ с мерой Рисса $\mu$ при условии 
\begin{equation}\label{O1Re}
\biggl| \; \int\limits_{1<|\zeta |\le r} \Re \frac1{\zeta}\, d\mu (\zeta )
\biggr| =O(1)\, , \quad r\to +\infty \, 
\end{equation}
найдется целая функция $\tilde g$ с нулями только на мнимой оси, для которой сумма $v_\mu+\log|\tilde g|$ --- субгармоническая функция конечного типа при порядке $1$ с индикатором $h_{v_{\mu}+\log |\tilde g|}$, удовлетворяющим оценке 
\begin{equation}\label{v+lg}
h_{v_{\mu}+\log |\tilde g|}\Bigl(\pm \frac{\pi}2\Bigr) <
\frac{12\pi (\pi +2){b}_{\mu}}{{\pi}/2-\beta}
\cdot\tg \beta \, ,
\end{equation}
\end{lemma}
Для удобства применения леммы <<повернем>> выметенную на $\RR$ функцию $u^{\bal}$ на $\pi/2$ радиан <<против часовой стрелки>>, т.\,е. применим замену $v(z):=u^{\bal}(ze^{-i\pi/2})$, $z\in \CC$. Тогда по 
теореме \ref{mthfo} функция $v$ конечного типа при порядке $1$ с носителем ее меры Рисса $\supp \mu\in i\RR$
 и  тем более удовлетворяет условию \eqref{O1Re} с постоянной $0$ в правой части вместо $O(1)$. Всегда можно выбрать число $\beta>0$ cтоль малым,  что правая часть в \eqref{v+lg} не больше  $\e>0$ из условия теоремы \ref{thmA}. Обратный поворот <<по часовой стрелке>>  показывает, что для некоторой целой функции экспоненциального типа $g$ (<<повернутая функци $\tilde g$>>) с нулями только на вещественной оси  выполнена
оценка  
\begin{equation*}
h_{u^{\bal}+\log|g|}(0)+h_{u^{\bal}+\log |g|}(\pi)\leq \e,
\end{equation*}
откуд ввиду совпадения $u=u^{\bal}$ на $\RR$ получаем требуемое \eqref{uwout}.
\end{proof}
\begin{remark} В теореме \ref{thmA} невозможно добиться ограниченности функции $u+\log |g|$ на $\RR$, так как в этом случае из субгармонического аналога легкой необходимой части теоремы Берлинга\,--\,Мальявена  о мультипликаторе функция $u\in \sbh_*(\CC) $ обязана быть функцией класса Картрайт \cite{MS}, \cite{BTKh}, , т.\,е. вместо условия \eqref{coImu}  
должен сходиться интеграл
\begin{equation*}
	J_2^+[u]\overset{\eqref{J2+}}{=}\int_{-\infty}^{+\infty}\frac{u^+(t)}{1+t^2} \dd t<+\infty,
\end{equation*}
что гораздо более сильное требование, чем \eqref{coImu}  \cite[теорема 12]{Levin56}.
\end{remark}
\begin{corollary}\label{cormult} Для любой целой функции $f$ экспоненциального типа класса A  при любом $\e>0$ существует целая функция функция-мультипликатор $g$ экспоненциального типа только с вещественными нулями, с которой  произведение $fg$ ---целая функция экспоненциального типа  и индикаторной диаграммой, лежащей внутри вертикальной полосы  ширины не больше $\e$. 
\end{corollary}
\begin{proof} Применить теорему \ref{thmA} к функции $u=\log |f|$.
\end{proof}
\begin{corollary}\label{corcomp} Для любой последовательности $\tt Z=\{{\tt z}_k\}$ конечной верхней плотности, удовлетворяющей условию  Бляшке  \eqref{imz} около $\infty$ для   $\CC^{\up}$ и $\CC_{\lw}$ одновременно и для любого числа $\e>0$, экспоненциальная система 
\begin{equation}
\Exp^{\tt Z}:=\Bigl\{ z^m\exp({\tt z}_kz)\colon 0\leq m\leq n_{\tt Z}\bigl(\{{\tt z}_k\}\bigr)-1\Bigr\}
\end{equation}
не полна в любом пространстве $\Hol (S_{\e})$ в топологии равномерной сходимости на компактах, когда  $S_{\e}$
--- вертикальная (полу)полоса ширины $\e$.   
\end{corollary}
\begin{proof} Сразу следует из следствия \ref{cormult} b стандартной классической взаимосвязи между последовательностями неединственности для целых функций экспоненциальных фунций и неполнотой экспоненциальных систем в пространствах голоморфных функций ( см. последние параграфы в \cite{Kha89}--\cite{kh91AA} применительно к (полу)полосам и многочисленные часто употребляемые схемы в \cite{Khsur}). 
\end{proof}

\end{document}